\newtheorem{lemma}{Lemma}[section]
\newtheorem{theorem}{Theorem}[section]
\newtheorem{proposition}{Proposition}[section]
\newtheorem{remark}{Remark}[section]
\numberwithin{equation}{section}
\begin{document}
\title[Stability for 2-D Poiseuille flow]{Stability for the 2-D plane Poiseuille flow in finite channel}
\thanks{{\it Keywords}: Poiseuille flow; transition threshold; stability.}
\thanks{{\it AMS Subject Classification}: 76N10, 35Q30, 35R35}%
\author[Shijin Ding]{Shijin Ding}
\address[S. Ding]{School of Mathematical Sciences, South China Normal University,
Guangzhou, 510631, China}
\email{dingsj@scnu.edu.cn}
\author[Zhilin Lin]{Zhilin Lin}
\address[Z. Lin]{School of Mathematical Sciences, South China Normal University,
Guangzhou, 510631, China}
\email{zllin@m.scnu.edu.cn}


\begin{abstract}
In this paper, we study the stability for 2-D plane Poiseuille flow $(1-y^2,0)$ in a channel $\mathbb{T}\times (-1,1)$ with Navier-slip boundary condition. We prove that if the initial perturbation for velocity field $u_0$ satisfies that $\|u_0\|_{H^{\frac{7}{2}+}} \leq \epsilon_1 \nu^{2/3}$ for some suitable small $0<\epsilon_1 \ll 1$ independent of viscosity coefficient $\nu$, then the solution to the Navier-Stokes equations is global in time and does not transit from the plane Poiseuille flow. This result improves the result of \cite{DL1} from $3/4$ to $2/3$.
\end{abstract}

\maketitle

\vspace{-5mm}

\section{Introduction}
In this paper, we consider the 2-D incompressible Navier-Stokes (N-S) equations in a channel $\Omega:=\mathbb{T}\times (-1,1)$:
\begin{equation}\label{1.1}
\left \{
\begin{array}{lll}
\partial_t v-\nu \Delta v+(v\cdot \nabla)v +\nabla q=0,\\
\nabla \cdot v=0,\\
v|_{t=0}=v_0(x,y),
\end{array}
\right.
\end{equation}
where $\nu=Re^{-1} >0$ is the viscosity, $v(t;x,y)\in \mathbb{R}^2$ is the velocity fields and $q\in \mathbb{R}$ is the pressure.

The plane Poiseuille flow $v_s=(1-y^2,0)$ is a steady solution to the Navier-Stokes equations for  constant pressure gradient $\nabla Q\equiv (q_0,0)$. To study the stability of plane Poiseuille flow, let us introduce the perturbation $u=v-v_s, p=q-Q$, then the perturbed equations around Poiseuille flow read as
\begin{equation}\label{1.2}
\left \{
\begin{array}{lll}
\partial_t u-\nu \Delta u+(u\cdot \nabla)u +(1-y^2)\partial_x u+\left(
\begin{array}{c} -2y u_2 \\ 0 \end{array}\right)+\nabla p =0,\\
\nabla \cdot u=0,\\
u|_{t=0}=u_0(x,y),
\end{array}
\right.
\end{equation}
$p$ is the perturbed pressure.

It is necessary to impose the boundary condition for \eqref{1.2}. In this paper, we consider  the Navier-slip boundary condition
\begin{equation}\label{Naver-slip-bc}
\omega(t;x,\pm 1)=0.
\end{equation}
where $\omega=\partial_y u_1-\partial_x u_2$ is the vorticity of the velocity.

The above problem \eqref{1.2} can be rewritten as 
\begin{equation}\label{vor-ns}
\left \{
\begin{array}{lll}
\partial_t\omega-\nu\Delta \omega+(1-y^2)\partial_x \omega+2\partial_x \Phi=-\nabla\cdot (u\omega),\\
\Delta \Phi=\omega, \ u=(\partial_y,-\partial_x)\Phi,\\
u|_{t=0}=u_0(x,y),
\end{array}
\right.
\end{equation}
where $\Phi$ is the stream function and $\omega=\partial_y u_1-\partial_x u_2$ is the vorticity. In addition, the boundary condition \eqref{Naver-slip-bc} is rewritten as follows 
\begin{equation}\label{Naver-slip-bc-1}
(\Phi,\omega)(t;x,\pm 1)=0.
\end{equation}


Our interest in this paper is to understand the long time dynamics behavior of \eqref{vor-ns} with \eqref{Naver-slip-bc-1} for $0<\nu \ll 1$.

The hydrodynamics stability and behavior with large Reynolds number have been one of important topic in fluid dynamics since Reynolds' experiment \cite{Rey}. In the experiment, Reynolds introduced the Reynolds number $Re$ and found that the laminar flow would transit into turbulence with the increasing of the Reynolds number. The experiment suggests that the stability of the laminar flow is very sensitive to Reynolds number. Indeed, by the standard spectrum analysis, most laminar flows (e.g., plane Couette flow \cite{Rom}) are stable for any Reynolds number, however the experiments indicate that the plane laminar flow would transit at high Reynolds regime \cite{Sch,Yaglom}, which is so-called subcritical transition \cite{Chapman}. And the contradiction between the experiment and spectrum analysis is well known as Sommerfeld paradox, see \cite{Li,Sch,Yaglom} and references therein.

To understand the mechanism of transition or stability of laminar flows, there are lots of aspects to resolve the Sommerfeld paradox, including the classical works by Kelvin, Orr, Sommerfeld, C. C. Lin, see \cite{Kelvin,cclin,Orr,Sch,sommerfeld,Yaglom} for instance. 
In particular, Kelvin \cite{Kelvin} pointed out that the basin of attraction of laminar flows may shrink as the Reynolds number tends to infinity, which will lead nonlinear instability. With this view, Trefethen et al. \cite{Tre} proposed the following {\bf  transition threshold problem}, whose mathematical version is formulated in  \cite{BGM4} as follows:\smallskip

\emph{Given a norm $\Vert \cdot\Vert_X$, determine a $\gamma=\gamma(X)$ such that
$$\Vert u_0\Vert_X \leq \nu^{\gamma} \Longrightarrow \ \ stability,$$
$$\Vert u_0\Vert_X \gg \nu^{\gamma} \Longrightarrow\ \ instability.$$}

\noindent  The exponent $\gamma$ is referred to as the transition threshold in the applied literatures. There are lots of works in physical or applied mathematical views devoted to determining $\gamma$ for some important shear flows, such as Couette flow and Poiseuille flow \cite{LHR,Orszag,Reddy,Yaglom}. However, the works about the rigorous analysis for transition threshold for shear flows are still needed and few.

To study the transition threshold problem, it is necessary to understand the linear stabilization effects of shear flows. In this paper, two stabilization effects of plane Poiseuille flow will be focused on: inviscid damping and enhanced dissipation. These two effects play the key roles in the hydrodynamics stability of shear flows. Inviscid damping and enhanced dissipation were firstly observed by Orr \cite{Orr} and Kelvin \cite{Kelvin} respectively, and these two effects for plane Couette flow even monotone flow are well known \cite{BGM1,BGM2,BGM3,BGM4,Bedrossian5,BMV,BVW,CLWZ,CWZ2,CWZ3,Z1,Z2} for different settings and domains. The results about nonlinear inviscid damping for monotone flow in Euler system can be found in \cite{IJia,MZ1}.

The inviscid damping and enhanced dissipation for non-monotone shear flow such as plane Poiseuille flow and Kolmogorov flow are more challenging. The inviscid damping and enhanced dissipation of Kolmogorov flow have been studied in \cite{Beck,Grenier2,IMM,LWZ2,LX,WZZ3} via different methods. For the non-monotone shear flow, the linear inviscid damping and vorticity depletion for Euler system have been studied by Wei, Zhang and Zhao \cite{WZZ2}, see also Ionescu et al. \cite{IIJia}. We refer to \cite{Beck,Grenier1,LWZ1} for more related results.

For the plane Poiseuille flow with high Reynolds number, the enhanced dissipation in $\mathbb{T}\times \mathbb{R}$ was firstly obtained by Coti Zelati et al. \cite{Coti} via the hypocoercivity method, see also \cite{De-Zo}. For plane Poiseuille flow in a channel $\mathbb{T}\times [-1,1]$ with Navier-slip boundary condition, Ding and Lin \cite{DL1} obtained the enhanced dissipation via resolvent estimate. The linear stability for pipe Poiseuille flow has been proved by Chen, Wei and Zhang \cite{CWZ1}, in which the enhanced dissipation was also obtained in fact by the resolvent estimates.

To understand the nonlinear dynamics stability of shear flow, it is necessary to study the {transition threshold problem} mentioned before. There are lots of works devoted to determining the exponent $\gamma$ for different shear flows via the numerical or asymptotic analysis, see \cite{Chapman,LHR, Reddy} and references therein. Recently, there are many progresses about the rigorous mathematical analysis on the transition threshold problem for different shear flows, including the Couette flow and Kolmogorov flow. For 3-D plane Couette flow, either $X$ is Gevrey class or Sobolev space, the transition threshold satisfies $\gamma\leq 1$ for the case without boundaries, see \cite{BGM1,BGM2,BGM3,WZ}. The result $\gamma\leq 1$ also holds in a finite channel with non-slip boundary condition \cite{CWZ2}. The transition threshold for 2-D Couette flow is proved as $\gamma \leq \frac{1}{2}$ in a channel with non-slip boundary condition \cite{CLWZ}, and $\gamma \leq \frac{1}{3}$ in $\mathbb{T}\times\mathbb{R}$ \cite{MZ,WZ1}. Recently, Coti Zelati et al. showed that the transition threshold for Couette flow in 3-D Bounssinesq equations enjoys $\gamma <1$ \cite{Coti-1}. More interesting related results can be found in \cite{Beck,BGM4,BMV,Grenier1,Wei}.

The results for non-monotone flow are few. 
The transition threshold for Kolmogorov flow is proved as $\gamma \leq \frac{2}{3}+$ for 2-D domain \cite{WZZ3}  and $\gamma \leq \frac{7}{4}$ for 3-D domain \cite{LWZ2}.
For 2-D plane Poiseuille flow in $\mathbb{T}\times \mathbb{R}$, Coti Zelati et al. \cite{Coti} proved that transition threshold is $\gamma \leq \frac{3}{4}+$ via hypocoercivity method, which has been improved to $\gamma \leq \frac{2}{3}+$ by Del Zotto \cite{De-Zo}. For 2-D plane Poiseuille flow in a channel with Navier-slip boundary condition, Ding and Lin \cite{DL1} proved that transition threshold is $\gamma \leq \frac{3}{4}$ by resolvent estimate method. Recently, for 3-D plane Poiseuille flow in a channel $\mathbb{T}\times [-1,1]\times \mathbb{R}$ with non-slip boundary condition, Chen, Ding, Lin and Zhang \cite{CDLZ}proved that transition threshold is $\gamma \leq \frac{7}{4}$, which is accordant with the numerical result by Lundbladh et al. \cite{LHR}.

To state our main results, let us define
$$P_0f(t,y)=\overline{f}(t,y)=\int_{\mathbb{T}}f(t,x,y)\mathrm{d}x, \ f_{\not=}=f-\overline{f}=\sum_{k\in\mathbb{Z}\setminus \{0\}}f_k(t,y)e^{ikx},$$
where $f_k(t,y)$ is the Fourier transform of $f$ in $x$.

The main result is stated as follows, which improves the transition threshold of Navier-slip boundary value problem studied in \cite{DL1}. Precisely, we improve the result of  \cite{DL1} from $\nu^{3/4}$ to $\nu^{2/3}$, which is stated as the following theorem. 
\begin{theorem}\label{nonlinear-slip}
Suppose that $\mathrm{div} u_0=0, u_0\in H^{\frac{7}{2}+}$ with $\|u_0\|_{H^{\frac{7}{2}+}} \leq \epsilon_1 \nu^{2/3}$ for some suitable small $0<\epsilon_1 \ll 1$ independent of $\nu$, then \eqref{vor-ns} with \eqref{Naver-slip-bc} admits a global in time solution $\omega$ with 
\begin{equation}\label{nonlinear-estimate}
\sum_{k\in \mathbb{Z}}\mathcal{E}_k \leq C\nu^{2/3},
\end{equation}
where the energy functional $\mathcal{E}_k$ is defined as 
\begin{equation}\label{energy-fun}
\mathcal{E}_k:=\left \{
\begin{array}{lll}
\|e^{c\nu^{\frac{1}{2}} t} \omega_k\|_{L^\infty L^2}+\nu^{\frac{1}{2}}|k|\|e^{c\nu^{\frac{1}{2}} t} \omega_k\|_{L^2L^2}\\
\qquad +(\nu |k|)^{1/4}\|e^{c\nu^{\frac{1}{2}} t} \omega_k\|_{L^2 L^2}+|k|^{1/2}\|e^{c\nu^{\frac{1}{2}} t} u_k\|_{L^2 L^2},& k\not=0,\\
\|\overline{\omega}\|_{L^\infty L^2},&k=0.
\end{array}
\right.
\end{equation}

\end{theorem}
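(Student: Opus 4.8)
The plan is to run a bootstrap (continuity) argument on the full energy functional $\sum_{k}\mathcal{E}_k$. Local well-posedness in $H^{\frac{7}{2}+}$ first provides a unique solution on a maximal interval $[0,T^*)$. I would then posit the a priori ansatz $\sum_{k}\mathcal{E}_k(t)\le 4C\nu^{2/3}$ on $[0,T]$ for every $T<T^*$ and aim to improve it to $\sum_{k}\mathcal{E}_k(t)\le 2C\nu^{2/3}$; a standard continuation argument then forces $T^*=\infty$ and yields the stated bound. The whole estimate is organized mode-by-mode in the $x$-frequency $k$, splitting into the zero mode $\overline{\omega}$ — governed by the forced one-dimensional heat equation $\partial_t\overline{\omega}-\nu\partial_y^2\overline{\omega}=-\partial_y\overline{u_2\omega}$ with $\overline{\omega}(\pm1)=0$, since both the transport term $(1-y^2)\partial_x\omega$ and the term $2\partial_x\Phi$ annihilate $k=0$ — and the nonzero modes $\omega_k$, $k\neq 0$, which carry the enhanced dissipation and inviscid damping.

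For the nonzero modes I would write Duhamel's formula
$$\omega_k(t)=e^{-tL_k}\omega_k(0)-\int_0^t e^{-(t-s)L_k}[\nabla\cdot(u\omega)]_k(s)\,ds,$$
where $L_k=-\nu\Delta_k+ik(1-y^2)+2ik\Delta_k^{-1}$ is the linearized operator subject to $(\Phi_k,\omega_k)(\pm1)=0$ and $\Delta_k=\partial_y^2-k^2$. The key linear inputs, which I assume are supplied by the resolvent estimates established earlier in the paper (in the spirit of \cite{DL1,CWZ1}), are the enhanced-dissipation decay $e^{-c\nu^{1/2}t}$ together with the space-time smoothing encoded in the norms defining $\mathcal{E}_k$: the gain $(\nu|k|)^{1/4}$ (the signature rate for a flow with an interior critical point at $y=0$, weaker than the monotone rate $(\nu k^2)^{1/3}$) and the inviscid-damping control $|k|^{1/2}\|u_k\|_{L^2L^2}$ on the velocity, in particular on $u_{2,k}=-ik\Phi_k$. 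Feeding the initial data through these estimates produces the $\nu^{2/3}$-sized linear contribution, so the remaining task is to show that the Duhamel term is a small multiple of $\nu^{2/3}$.

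The nonlinear term is $\nabla\cdot(u\omega)=u\cdot\nabla\omega$ by incompressibility, and I would expand it in modes and group the interactions as (zero)$\times$(nonzero), (nonzero)$\times$(zero), and (nonzero)$\times$(nonzero). The structurally crucial observation is that $\overline{u_2}=-\overline{\partial_x\Phi}=0$, so the forcing of the zero mode, $\partial_y\overline{u_2\omega}=\partial_y\sum_{k\neq0}(u_2)_k\,\omega_{-k}$, is driven purely by nonzero-mode self-interactions. Pairing the inviscid-damping bound on $(u_2)_k$ against the enhanced-dissipation bound on $\omega_{-k}$, and carrying the weight $e^{c\nu^{1/2}t}$ through each product by Cauchy--Schwarz in time, controls $\|\overline{\omega}\|_{L^\infty L^2}$ by $(\sum_k\mathcal{E}_k)^2$ up to the appropriate negative power of $\nu$. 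For the nonzero modes, the transport of $\omega_k$ by the mean perturbation $\overline{u}$ and the nonzero--nonzero interactions are handled by one-dimensional product estimates in $y$ combined with Biot--Savart, $u=\nabla^\perp\Delta^{-1}\omega$, and the same time-weighting; the $H^{\frac{7}{2}+}$ regularity of the data is what makes the mode sums and these product bounds converge uniformly.

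The delicate point — and the place where the improvement from $3/4$ to $2/3$ must be won — is the quantitative balance of these nonlinear interactions against the dissipative gains. The critical layer at $y=0$, where the shear $\partial_y(1-y^2)=-2y$ degenerates, limits the enhanced-dissipation rate to $\nu^{1/2}$ and weakens both the smoothing exponent to $(\nu|k|)^{1/4}$ and the inviscid damping; extracting the sharp $\nu$-powers from the resolvent estimates near this layer, and then distributing the weight $e^{c\nu^{1/2}t}$ so that every quadratic term closes at the scale $\nu^{-2/3}\cdot(\nu^{2/3})^2=\nu^{2/3}$ with a constant proportional to $\epsilon_1$, is the heart of the matter. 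I expect the hardest individual estimate to be the zero-mode forcing, where optimal use of the inviscid damping of $u_2$ — rather than crude energy bounds — is what lowers the threshold exponent from $3/4$ to $2/3$; choosing $\epsilon_1$ small enough then absorbs all nonlinear contributions and closes the bootstrap.
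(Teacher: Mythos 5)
Your overall architecture coincides with the paper's: decomposition into the zero mode and the nonzero modes, the observation that $(u_2)_0=0$ (divergence-free condition plus the boundary condition), so the zero mode satisfies a forced heat equation driven only by nonzero-mode self-interactions, the weighted linear space-time estimates with weight $e^{c\nu^{1/2}t}$ (Proposition \ref{sp-slip-0}), and a final continuity argument. (The paper invokes global well-posedness of the 2-D Navier--Stokes equations rather than a local-existence bootstrap, but that difference is immaterial.)

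The genuine gap is in the quantitative closure, and the place where you localize the difficulty is the wrong one. The zero-mode forcing is not the hardest term: by \eqref{0mode} and \eqref{f-2-k} it closes with a factor $\nu^{-1/2}$, i.e.\ that term alone would tolerate data of size $\nu^{1/2}$. The binding term is the transport nonlinearity $f_1=u_1\omega$ acting on the nonzero modes. If you feed $ik(f_1)_k$ into the linear theory in the natural way (as the $ikf_1$-forcing of Proposition \ref{sp-slip-0}, whose coefficient is $\nu^{-1/2}$) and bound $\|e^{c\nu^{1/2}t}\omega_{k-l}\|_{L^2L^2}$ by the enhanced-dissipation part of $\mathcal{E}_{k-l}$ alone (a loss of $(\nu|k-l|)^{-1/4}$), the quadratic term scales like $\nu^{-3/4}$ and you merely reproduce the threshold of \cite{DL1}. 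The paper's mechanism for reaching $2/3$ is different: (i) $ik(f_1)_k$ is treated as an $f_4$-type ($L^2$-in-$y$) forcing, whose coefficient is $|k|\min\{(\nu|k|)^{-1/4},(\nu|k|^2)^{-1/2}\}\le \nu^{-3/8}|k|^{3/8}$; (ii) $\|e^{c\nu^{1/2}t}\omega_{k-l}\|_{L^2L^2}$ is interpolated between the enhanced-dissipation norm and the heat-dissipation norm (powers $5/6$ and $1/6$), which yields $\nu^{-7/24}|k-l|^{-3/8}\mathcal{E}_{k-l}$, the factor $|k-l|^{-3/8}$ being exactly what absorbs $|k|^{3/8}$ through $|k|\lesssim |l||k-l|$. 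The product $\nu^{-3/8}\cdot\nu^{-7/24}=\nu^{-2/3}$ is precisely where the exponent $2/3$ is decided. The inviscid damping term $|k|^{1/2}\|e^{c\nu^{1/2}t}u_k\|_{L^2L^2}$ is indeed essential, but its role is in the $f_2=u_2\omega$ estimates for \emph{all} modes $k$ (it gives $\|e^{c\nu^{1/2}t}(f_2)_k\|_{L^2L^2}\lesssim \sum_l\mathcal{E}_l\mathcal{E}_{k-l}$ with no $\nu$-loss, see \eqref{f-2-k}), not specifically in the zero-mode forcing. Since your proposal never specifies with which coefficients the nonlinear terms enter the space-time estimates, and commits to the wrong term as the critical one, as written it would close only at $\nu^{-3/4}$, not $\nu^{-2/3}$.
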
 

\begin{remark}
Indeed, by the stability estimate \eqref{nonlinear-estimate}, we can deduce that 
$$\sum_{k\in \mathbb{Z}}\|e^{c\nu^{\frac{1}{2}} t} u_k\|_{L^\infty L^\infty}\leq C\nu^{2/3} ,$$
which implies that the 2-D plane Poiseuille flow is absolutely stable.
\end{remark}

\begin{remark}
Even for the Navier-slip boundary value problem, the transition threshold $\gamma \leq 2/3$ should not be optimal. It is very interesting and challenging to study the optimal transition threshold of the Poiseuille flow. 
\end{remark}

Let us give some notations to end this section. Throughout this paper, we always assume that $|k|\geq 1$ and denote by $C$ a positive constant independent of $\nu,k$. We also use $a\sim b$ for $C^{-1}b\leq a\leq Cb$ and $a\lesssim b$ for $a\leq Cb$ for some constants $C>0$ independent of $\nu,k$.

\section{Sketch and key ideas of the proof}\label{outline}

In this section, let us give the sketch and some key ideas in our analysis. 

In our analysis, compared with \cite{DL1}, the most important point is the improved space-time estimates. Indeed, in the view of Fourier transform, the toy model for our problem (linear and nonlinear sense) reads
\begin{equation}\label{toy-model}
\left \{
\begin{array}{lll}
\partial_t \omega+\mathcal{L}_k \omega=-ikf_1-\partial_y f_2-f_3-f_4,\\
(\partial_y^2-k^2)\varphi=\omega, \ u=(\partial_y,-ik)\varphi,\\
 (\varphi,\omega)|_{y=\pm 1}=(0,0),\ \omega|_{t=0}=\omega_0(k,y),
\end{array}
\right.
\end{equation}
where $\omega_0(k,\pm 1)=0$ and $k\not=0$, the linearized operator is defined as 
$$\mathcal{L}_k:=-\nu (\partial_y^2-k^2)+ik(1-y^2)+2ik(\partial_y^2-|k|^2)^{-1}.$$

Let us recall that the space-time estimates in \cite{DL1} (in the Fourier version) read as 
\begin{equation}\nonumber
\begin{aligned}
\|e^{c'(\nu |k|)^{1/2}t}\omega\|_{L^\infty L^2}^2+(\nu |k|)^{1/2} &\|e^{c'(\nu |k|)^{1/2}t}\omega\|_{L^2 L^2}^2 +\nu \|e^{c'(\nu |k|)^{1/2}t}(\partial_y,|k|)\omega\|_{L^2 L^2}^2\\
\lesssim& \|\omega_0\|_{L^2}^2+\nu^{-1}\|e^{c'(\nu |k|)^{1/2}t}(f_1,f_2)\|_{L^2L^2}^2,
\end{aligned}
\end{equation}
where the term $(\nu |k|)^{1/2} \|e^{c'(\nu |k|)^{1/2}t}\omega\|_{L^2 L^2}^2$ is due to enhanced dissipation, which is resulted from the resolvent estimate obtained in \cite{DL1}. In a word, the stabilization mechanism used in \cite{DL1} is only dissipation, including the enhanced dissipation and heat diffusion. 

To improve the result of \cite{DL1} to $\gamma\leq 2/3$, the inviscid damping effect is added in the space-time estimates in this paper. More precisely, the following space-time estimates are used in this paper
\begin{equation}\nonumber
\begin{aligned}
& \|e^{c \nu^{\frac{1}{2}}t} \omega\|_{L^\infty L^2}^2+\nu\|e^{c \nu^{\frac{1}{2}}t} (\partial_y,|k|)\omega\|_{L^2 L^2}^2\\
&+ (\nu |k|)^{\frac{1}{2}}\|e^{c \nu^{\frac{1}{2}}t} \omega\|_{L^2 L^2}^2+ |k|\|e^{c \nu^{\frac{1}{2}}t}u\|_{L^2 L^2}^2\\
\lesssim& \|\Delta_k \omega_0\|_{L^2}^2+\nu^{-1} \|e^{c \nu^{\frac{1}{2}}t}(f_1,f_2)\|_{L^2 L^2}^2+|k|^{-1}\|e^{c \nu^{\frac{1}{2}}t} (\partial_y,|k|)f_3\|_{L^2 L^2}^2\\
&+\min\{(\nu |k|)^{-\frac{1}{2}},(\nu |k|^2)^{-1}\}\|e^{c \nu^{\frac{1}{2}}t} f_4\|_{L^2 L^2}^2.
\end{aligned}
\end{equation}
Compared with the space-time estimates in \cite{DL1}, in the above estimates, the part $|k|\|e^{c\nu^{\frac{1}{2}}t} u\|_{L^2 L^2}^2$ is due to the inviscid damping effect of the Poiseuille flow. 

To achieve the desired space-time estimates, following the idea of \cite{CDLZ}, the key point is to establish the resolvent estimates for the linearized operator around the Poiseuille flow, see Section \ref{sec:resolvent} for details. With the resolvent estimates at hands, one can achieve the space-time estimates for the linearized N-S system. 

For the nonlinear problem, the energy estimates for non-zero modes read as 
\begin{align*}
\mathcal{E}_k \lesssim  \|\Delta_k \omega_{0,k}\|_{L^2}+ \nu^{-\frac{3}{8}} |k|^{\frac{3}{8}}  \|e^{c\nu^{\frac{1}{2}} t} (f_1)_k\|_{L^2 L^2}+\nu^{-\frac{1}{2}}\| e^{c\nu^{\frac{1}{2}} t}(f_2)_k\|_{L^2 L^2},
\end{align*}
where $\mathcal{E}_k$ is defined as
 \begin{align*}
\mathcal{E}_k:=&\|e^{c\nu^{\frac{1}{2}} t} \omega_k\|_{L^\infty L^2}+\nu^{\frac{1}{2}}|k|\|e^{c\nu^{\frac{1}{2}} t} \omega_k\|_{L^2L^2}\\
& +(\nu |k|)^{1/4}\|e^{c\nu^{\frac{1}{2}} t} \omega_k\|_{L^2 L^2}+|k|^{1/2}\|e^{c\nu^{\frac{1}{2}} t} u_k\|_{L^2 L^2}.\end{align*}
In the above functional, the terms $\|e^{c\nu^{\frac{1}{2}} t} \omega_k\|_{L^\infty L^2}$ and $\nu^{\frac{1}{2}}|k|\|e^{c\nu^{\frac{1}{2}} t} \omega_k\|_{L^2L^2}$ are corresponding to the basic energy and heat diffusion, the term $(\nu |k|)^{1/4}\|e^{c\nu^{\frac{1}{2}} t} \omega_k\|_{L^2 L^2}$ is due to the enhanced dissipation, and $|k|^{1/2}\|e^{c\nu^{\frac{1}{2}} t} u_k\|_{L^2 L^2}$ is due to the inviscid damping effect of Poiseuille flow.

With the nonlinear estimates, the nonlinear stability follows from a continuous argument.

\section{Resolvent estimates for Orr-Sommerfeld equation}\label{sec:resolvent}
In this section, we study the resolvent estimates for the Orr-Sommerfeld equation around the plane Poiseuille flow as follows 
\begin{equation}\label{os-1}
\left \{
\begin{array}{lll}
-\nu (\partial_y^2 -|k|^2)w+ik[(1-y^2-\lambda)w+2\varphi]=F,\\
(\partial_y^2-|k|^2)\varphi=w, \ \varphi(\pm 1)=w(\pm 1)=0,
\end{array}
\right.
\end{equation}
where $k\neq 0$ and $F\in H^1_0$.

The resolvent estimates for \eqref{os-1} are stated as follows.
\begin{proposition}\label{resolvent-slip}
Let $w$ be the solution to \eqref{os-1}, then it holds that 
\begin{equation}\label{resol-slip}
\begin{aligned}
 \nu^{\frac{3}{8}}& |k|^{\frac{5}{8}} (\|w\|_{L^1}+|k|^{\frac{1}{2}}\|u\|_{L^2})\\
&+(\nu |k|)^{\frac{1}{2}}\|w\|_{L^2}+\nu^{\frac{3}{4}}|k|^{\frac{1}{4}}\|(\partial_y,|k|)w\|_{L^2}\lesssim \|F\|_{L^2},\\
\nu^{\frac{3}{4}}&|k|^{\frac{1}{4}} \|w\|_{L^2}+\nu \|(\partial_y,|k|)w\|_{L^2}+(\nu |k|)^{\frac{1}{2}}\|u\|_{L^2} \lesssim \|F\|_{H^{-1}_k},\\
|& \nu/k|^{\frac{1}{2}} \|(\partial_y,|k|)w\|_{L^2}+|\nu/k|^{\frac{1}{4}}\|w\|_{L^2} \\
&+|\nu/k|^{\frac{1}{8}}\|u\|_{L^\infty}+\|u\|_{L^2}\lesssim |k|^{-1} \|(\partial_y,|k|)F\|_{L^2}.
\end{aligned}
\end{equation}

Moreover, there holds that 
\begin{equation}\label{resol-slip-lambda}
\begin{aligned}
\nu^{\frac{1}{6}} & |k|^{\frac{5}{6}} (|\lambda-1|^{\frac{1}{2}}+|\nu/k|^{\frac{1}{4}})^{\frac{1}{3}}\|u\|_{L^2}\\
&+ \nu^{\frac{2}{3}}  |k|^{\frac{1}{3}} (|\lambda-1|^{\frac{1}{2}}+|\nu/k|^{\frac{1}{4}})^{\frac{1}{3}}\|(\partial_y,|k|)w\|_{L^2}\\
 &+ \nu^{\frac{1}{3}}|k|^{\frac{2}{3}} (|\lambda-1|^{\frac{1}{2}}+|\nu/k|^{\frac{1}{4}})^{\frac{2}{3}}\|w\|_{L^2}\lesssim \|F\|_{L^2},\\
 \nu^{\frac{2}{3}} & |k|^{\frac{1}{3}} (|\lambda-1|^{\frac{1}{2}}+|\nu/k|^{\frac{1}{4}})^{\frac{1}{3}}\| w\|_{L^2}\lesssim \|F\|_{H^{-1}_k}.
\end{aligned}
\end{equation}

Here, $\|F\|_{H^{-1}_k}:=\inf\limits_{\{g\in H^1_0:\|g\|_{H^1_k}=1\}}\langle f,g\rangle$ with $\|g\|_{H^1_k}=\|(\partial_y,|k|)g\|_{L^2}.$
\end{proposition}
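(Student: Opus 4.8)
The plan is to prove Proposition~\ref{resolvent-slip} by the multiplier (energy) method applied directly to the Orr--Sommerfeld system \eqref{os-1}, exploiting the non-monotone structure of the Poiseuille profile $U(y)=1-y^2$: here $U'(y)=-2y$ vanishes only at the turning point $y=0$ and $U''\equiv-2$. Throughout I write $\Delta_k=\partial_y^2-k^2$, so that $w=\Delta_k\varphi$, $u=(\partial_y\varphi,-ik\varphi)$ and $\|u\|_{L^2}^2=-\langle w,\varphi\rangle$; note that the zeroth-order term $2ik\varphi$ equals $-ikU''\varphi$, so \eqref{os-1} is exactly the viscous Rayleigh equation for this profile. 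I also set $\lambda=\lambda_r+i\lambda_i$ and denote by $y_c$ a point where $U(y_c)=\lambda_r$, observing that $|\lambda-1|^{1/2}$ measures the distance of the resonant set $\{U=\lambda_r\}$ to the turning point, since $U(0)=1$.

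\textbf{Step 1 (basic identities).} First I would record the two fundamental identities obtained by pairing \eqref{os-1} with $\bar w$ and with $\bar\varphi$ in $L^2(-1,1)$ and separating real and imaginary parts. Pairing with $\bar w$, the real part yields the plain dissipation bound $\nu\|(\partial_y,|k|)w\|_{L^2}^2 \lesssim |\langle F,w\rangle| + |k\lambda_i|\|w\|_{L^2}^2$, while the imaginary part produces the crucial balance $k\int(U-\lambda_r)|w|^2\,dy - 2k\|u\|_{L^2}^2 = \mathrm{Im}\langle F,w\rangle$. Pairing with $\bar\varphi$ and integrating by parts (using $U''=-2$) produces the term $\nu\|w\|_{L^2}^2$ and an identity controlling $\|u\|_{L^2}^2$ together with the momentum-type integral $\int U'\,\mathrm{Im}(\bar\varphi\,\partial_y\varphi)\,dy$. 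These identities are the raw material from which every line of \eqref{resol-slip} and \eqref{resol-slip-lambda} will be assembled.

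\textbf{Step 2 (enhanced dissipation and the turning point).} The heart of the argument is to upgrade the a priori harmless quantity $\int(U-\lambda_r)|w|^2$ into a genuine lower bound $\gtrsim\|w\|_{L^2}^2$, which fails pointwise near $\{U=\lambda_r\}$ and must be recovered from dissipation. I would split $(-1,1)$ into critical layers around the resonant points and the complementary outer region: on the outer region $|U-\lambda_r|$ is bounded below and the imaginary identity of Step~1 controls $\|w\|_{L^2}$ directly, while inside the layers I would spend the dissipation $\nu\|(\partial_y,|k|)w\|_{L^2}^2$ on a layer of the correct width. The turning point $y=0$, where $U(y)\approx1-y^2$ and $U'$ degenerates, forces a layer of width $(\nu/|k|)^{1/4}$ rather than the monotone width $(\nu/|k|)^{1/3}$; balancing the two contributions yields precisely the enhanced-dissipation rate $(\nu|k|)^{1/2}$ of \eqref{resol-slip}, and the factor $(|\lambda-1|^{1/2}+|\nu/k|^{1/4})$ in \eqref{resol-slip-lambda} is exactly the larger of the distance $|\lambda_r-U(0)|^{1/2}\sim|y_c|$ and the layer width. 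The $\lambda$-dependent estimates are then obtained by repeating this localization with the resonance position tracked explicitly as it migrates toward the turning point.

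\textbf{Step 3 (inviscid damping, assembly, and main obstacle).} To capture the velocity bounds --- the terms $|k|^{1/2}\|u\|_{L^2}$, $(\nu|k|)^{1/2}\|u\|_{L^2}$, $|\nu/k|^{1/8}\|u\|_{L^\infty}$ and the inviscid-damping line $\|u\|_{L^2}\lesssim|k|^{-1}\|(\partial_y,|k|)F\|_{L^2}$ --- I would use the $\bar\varphi$-pairing, whose imaginary part links $\|u\|_{L^2}^2$ to the critical-layer integral of $U'$; the nondegeneracy $U''=-2\neq0$ prevents embedded modes and supplies the limiting-absorption-type control that closes the velocity estimate against $\|(\partial_y,|k|)F\|_{L^2}$, with the one-sided pairing giving the $H^{-1}_k$ version. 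The remaining fractional lines (the combination $\nu^{3/8}|k|^{5/8}(\|w\|_{L^1}+|k|^{1/2}\|u\|_{L^2})$, the $L^\infty$ bound, and the $\min$ over the two estimates of the $f_4$-type source) then follow by interpolating the $L^2$ estimates of Steps~2--3 with one-dimensional Gagliardo--Nirenberg and Sobolev inequalities on the layer and bookkeeping the powers of $\nu,k,\lambda$. I expect the decisive difficulty to be the turning point $y=0$, following the idea of \cite{CDLZ}: both stabilizing mechanisms degenerate there, the two symmetric critical layers merge as $\lambda_r\uparrow1$, and one must glue the inner and outer estimates uniformly across this merger while extracting the sharp exponents. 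Establishing the width $(\nu/|k|)^{1/4}$ and rate $(\nu|k|)^{1/2}$ --- slower than the monotone/Couette rates used in \cite{DL1,CLWZ} --- and showing they dominate the monotone region is precisely the point that yields the improved $2/3$ threshold.
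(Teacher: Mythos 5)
First, note what you are being compared against: the paper does not prove Proposition \ref{resolvent-slip} at all --- its ``proof'' is the single line ``See \cite{CDLZ}'', so the relevant benchmark is the resolvent-estimate machinery of that reference (and its antecedents \cite{CLWZ,CWZ1}). Your outline correctly identifies the right family of tools (energy identities, critical layers of width $|\nu/k|^{1/4}$, the turning point $y=0$, interpolation), but the central step as you describe it does not work. Pairing \eqref{os-1} with $\bar w$ gives, as you say, the imaginary-part balance $k\int_{-1}^{1}(U-\lambda_r)|w|^2\,\mathrm{d}y-2k\|u\|_{L^2}^2=\mathrm{Im}\langle F,w\rangle$ with $U=1-y^2$. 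But for the genuinely difficult case $\lambda_r\in(0,1)$ the weight $U-\lambda_r$ \emph{changes sign} at the two critical points $\pm y_c$, $y_c=\sqrt{1-\lambda_r}$: it is positive on $\{|y|<y_c\}$ and negative on $\{|y|>y_c\}$. The integral is therefore sign-indefinite, contributions from the two outer regions can cancel against the inner one, and knowing $|U-\lambda_r|\gtrsim\delta$ off the layers gives you no lower bound whatsoever on $\|w\|_{L^2}$ there --- e.g.\ a $w$ split between $y\approx 0$ and $y\approx\pm1$ makes the integral vanish identically. This is exactly why the actual proofs do not use the plain identity: they pair the equation with sign-corrected multipliers of the type $\frac{U-\lambda_r}{|U-\lambda_r|+\delta}\,\bar w$ (suitably localized near each critical point), turning the indefinite weight into $\frac{(U-\lambda_r)^2}{|U-\lambda_r|+\delta}\ge 0$, and then absorb the resulting commutator/derivative terms into the dissipation $\nu\|(\partial_y,|k|)w\|_{L^2}^2$; in addition the nonlocal term $2ik\varphi$ must be carried through this multiplier. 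That construction is the engine behind $(\nu|k|)^{1/2}\|w\|_{L^2}\lesssim\|F\|_{L^2}$ and behind the factors $(|\lambda-1|^{1/2}+|\nu/k|^{1/4})$ in \eqref{resol-slip-lambda}, and it is absent from your plan.

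The second gap is Step 3. The inviscid-damping lines --- $\|u\|_{L^2}\lesssim|k|^{-1}\|(\partial_y,|k|)F\|_{L^2}$, the $\|u\|_{L^\infty}$ bound, and the $(\nu|k|)^{1/2}\|u\|_{L^2}\lesssim\|F\|_{H^{-1}_k}$ estimate --- do not follow from the remark that ``$U''=-2$ prevents embedded modes'' plus an unspecified limiting-absorption argument; in \cite{CDLZ,CWZ1} they require a separate, substantial analysis of the inhomogeneous Rayleigh-type equation for the stream function (Green's-function/wave-operator-type bounds, exploiting $F\in H^1_0$ and the slip boundary conditions), interleaved with the multiplier estimates because the $u$-bounds and the $w$-bounds feed into each other through the nonlocal term. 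Likewise the first line of \eqref{resol-slip}, with the $\|w\|_{L^1}$ norm and the prefactor $\nu^{3/8}|k|^{5/8}$, comes from specific duality/interpolation computations that you defer to ``bookkeeping''. So while your heuristics (layer width $(\nu/|k|)^{1/4}$, rate $(\nu|k|)^{1/2}$, merger of the two layers as $\lambda_r\uparrow1$) are the correct ones, the proposal as written is a road map whose decisive mechanisms --- the sign-corrected multiplier and the Rayleigh-equation analysis for $u$ --- are missing, and the naive outer-region argument it proposes in their place is false.
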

\begin{proof}
See \cite{CDLZ}.
\end{proof}

\section{Space-time estimates for linearized Navier-Stokes equations around the Poiseuille flow}\label{sec:sp-slip}
The linearized Navier-Stokes equations around Poiseuille flow read as 
\begin{equation}\label{toy-sp-slip}
\left \{
\begin{array}{lll}
\partial_t \omega-\nu(\partial_y^2-k^2)\omega+ik(1-y^2)\omega+2ik\varphi=-ikf_1-\partial_y f_2-f_3-f_4,\\
(\partial_y^2-k^2)\varphi=\omega, \ \ u=(\partial_y,-ik)\varphi,\\
\varphi(t,k,\pm 1)=\omega (t,k,\pm 1)=0,\\
\omega|_{t=0}=\omega_0(k,y),
\end{array}
\right.
\end{equation}
where $f_3|_{y=\pm 1}=0$ and $k\not=0$.

The space-time estimates for \eqref{toy-sp-slip} are stated as follows.

\begin{proposition}\label{sp-slip-0}
Suppose the $\omega_0 \in H^1_0\cap H^2$, then the solution $\omega$ to \eqref{toy-sp-slip} enjoys that the following estimates:
\begin{equation}\label{sp-slip}
\begin{aligned}
& \| \omega\|_{L^\infty L^2}^2+\nu\|(\partial_y,|k|)\omega\|_{L^2 L^2}^2+ (\nu |k|)^{\frac{1}{2}}\|\omega\|_{L^2 L^2}^2+ |k|\|u\|_{L^2 L^2}^2\\
\lesssim& \|\Delta_k \omega_0\|_{L^2}^2+\nu^{-1} \|(f_1,f_2)\|_{L^2 L^2}^2\\
&+|k|^{-1}\|(\partial_y,|k|)f_3\|_{L^2 L^2}^2+\min\{(\nu |k|)^{-\frac{1}{2}},(\nu |k|^2)^{-1}\}\|f_4\|_{L^2 L^2}^2.
\end{aligned}
\end{equation}

Moreover, there exists $0<c\ll 1$ such that 
\begin{equation}\label{sp-slip-weight}
\begin{aligned}
& \|e^{c \nu^{\frac{1}{2}}t} \omega\|_{L^\infty L^2}^2+\nu\|e^{c \nu^{\frac{1}{2}}t} (\partial_y,|k|)\omega\|_{L^2 L^2}^2\\
&+ (\nu |k|)^{\frac{1}{2}}\|e^{c \nu^{\frac{1}{2}}t} \omega\|_{L^2 L^2}^2+ |k|\|e^{c \nu^{\frac{1}{2}}t}u\|_{L^2 L^2}^2\\
\lesssim& \|\Delta_k \omega_0\|_{L^2}^2+\nu^{-1} \|e^{c \nu^{\frac{1}{2}}t}(f_1,f_2)\|_{L^2 L^2}^2+|k|^{-1}\|e^{c \nu^{\frac{1}{2}}t} (\partial_y,|k|)f_3\|_{L^2 L^2}^2\\
&+\min\{(\nu |k|)^{-\frac{1}{2}},(\nu |k|^2)^{-1}\}\|e^{c \nu^{\frac{1}{2}}t} f_4\|_{L^2 L^2}^2.
\end{aligned}
\end{equation}
\end{proposition}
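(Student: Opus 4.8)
The plan is to obtain both estimates from the resolvent bounds of Proposition~\ref{resolvent-slip} via a Fourier--Laplace transform in time, deducing the weighted estimate \eqref{sp-slip-weight} from \eqref{sp-slip} by conjugation. For that reduction, set $\Omega=e^{c\nu^{1/2}t}\omega$, which solves \eqref{toy-sp-slip} with $\mathcal{L}_k$ replaced by $\mathcal{L}_k-c\nu^{1/2}$ and each $f_i$ by $e^{c\nu^{1/2}t}f_i$. The enhanced-dissipation bound $(\nu|k|)^{1/2}\|w\|_{L^2}\lesssim\|F\|_{L^2}$ in \eqref{resol-slip} says exactly that the resolvent of $\mathcal{L}_k$ is bounded by $(\nu|k|)^{-1/2}\le\nu^{-1/2}$ on the imaginary axis, so the spectrum sits to the right of $\mathrm{Re}\,s=c'\nu^{1/2}$; since the shift moves the spectral parameter $\lambda$ by an imaginary amount of size $c\nu^{1/2}/|k|$, which is dominated by the weights $|\lambda-1|^{1/2}+|\nu/k|^{1/4}$, all bounds of Proposition~\ref{resolvent-slip} persist for $\mathcal{L}_k-c\nu^{1/2}$, and it suffices to prove \eqref{sp-slip}.

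To prove \eqref{sp-slip} I would first take $\omega_0=0$, extend $\omega$ by zero to $t<0$, and Fourier transform in $t$: the equation becomes $(\mathcal{L}_k+i\xi)\widehat\omega=\widehat g$ with $g=-ikf_1-\partial_yf_2-f_3-f_4$, and writing $i\xi=-ik\lambda$ (i.e.\ $\lambda=-\xi/k\in\mathbb{R}$) this is precisely \eqref{os-1} with $F=\widehat g$. Plancherel's identity $\|\omega\|_{L^2L^2}^2=\tfrac{|k|}{2\pi}\int_{\mathbb{R}}\|\widehat\omega(\lambda)\|_{L^2}^2\,d\lambda$ (and likewise for $(\partial_y,|k|)\omega$ and $u$) then converts each of the three $L^2L^2$-type terms on the left of \eqref{sp-slip} into a $\lambda$-integral of the corresponding resolvent bound. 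The strategy is to measure each forcing term in the norm that is cheapest for it. Because $\|ik\,\widehat{f_1}\|_{H^{-1}_k}\lesssim\|\widehat{f_1}\|_{L^2}$ and $\|\partial_y\widehat{f_2}\|_{H^{-1}_k}\lesssim\|\widehat{f_2}\|_{L^2}$ (the prefactors $ik$ and $\partial_y$ supplying the gain in the dual norm), the second block of \eqref{resol-slip} yields exactly the $\nu^{-1}\|(f_1,f_2)\|_{L^2L^2}^2$ contribution. For $f_3$ (which vanishes at $y=\pm1$) the third block, whose right-hand side is $|k|^{-1}\|(\partial_y,|k|)F\|_{L^2}$, produces the $|k|^{-1}\|(\partial_y,|k|)f_3\|_{L^2L^2}^2$ term. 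For $f_4$ the first block of \eqref{resol-slip} gives the factor $(\nu|k|)^{-1/2}$, while the $H^{-1}_k$ bound of \eqref{resol-slip-lambda}, after using $\|\widehat{f_4}\|_{H^{-1}_k}\le|k|^{-1}\|\widehat{f_4}\|_{L^2}$ and bounding the weight below by $|\nu/k|^{1/12}$, gives $(\nu|k|^2)^{-1}$; keeping the smaller of the two in each $k$-regime produces the stated minimum.

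The initial data is restored through the semigroup $e^{-t\mathcal{L}_k}\omega_0$. The resolvent bound above yields, by Gearhart--Pr\"uss, the enhanced-dissipation decay $\|e^{-t\mathcal{L}_k}\omega_0\|_{L^2}\lesssim e^{-c(\nu|k|)^{1/2}t}\|\omega_0\|_{L^2}$, hence $\|\omega\|_{L^\infty L^2}^2+(\nu|k|)^{1/2}\|\omega\|_{L^2L^2}^2\lesssim\|\omega_0\|_{L^2}^2$, while the basic energy estimate controls $\nu\|(\partial_y,|k|)\omega\|_{L^2L^2}^2\lesssim\|\omega_0\|_{L^2}^2$; by Poincar\'e $\|\omega_0\|_{L^2}^2\lesssim\|(\partial_y,|k|)\omega_0\|_{L^2}^2\le\|\Delta_k\omega_0\|_{L^2}\|\omega_0\|_{L^2}$, so all three are $\lesssim\|\Delta_k\omega_0\|_{L^2}^2$. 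It is the velocity term $|k|\|u\|_{L^2L^2}^2$ that forces the stronger norm $\|\Delta_k\omega_0\|_{L^2}$: with $\widehat u(\lambda)$ the velocity of the solution of \eqref{os-1} for $F=\omega_0$, I would split the $\lambda$-integral into the non-resonant range $|\lambda|\ge2$, where $\|\widehat u\|_{L^2}\lesssim|k|^{-2}|\lambda|^{-1}\|\omega_0\|_{L^2}$ gives a convergent integral bounded by $|k|^{-2}\|\omega_0\|_{L^2}^2$, and the resonant band $|\lambda|\le2$ of bounded length, on which the uniform third-block bound $\|\widehat u\|_{L^2}\lesssim|k|^{-1}\|(\partial_y,|k|)\omega_0\|_{L^2}$ integrates to $\|(\partial_y,|k|)\omega_0\|_{L^2}^2\lesssim\|\Delta_k\omega_0\|_{L^2}^2$.

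Finally the $L^\infty L^2$ bound for the forced part comes from the energy identity obtained by pairing \eqref{toy-sp-slip} with $\overline\omega$: since the transport term $ik(1-y^2)\omega$ and the coupling $2ik\varphi$ are skew (their real parts vanish after integrating by parts with $(\varphi,\omega)|_{y=\pm1}=0$), one gets $\tfrac12\tfrac{d}{dt}\|\omega\|_{L^2}^2+\nu\|(\partial_y,|k|)\omega\|_{L^2}^2=\mathrm{Re}\langle g,\overline\omega\rangle$, and the forcing integral is closed by Cauchy--Schwarz against the heat-dissipation bound $\nu|k|^2\|\omega\|_{L^2L^2}^2$ already controlled by the right-hand side of \eqref{sp-slip}; for $f_3,f_4$ this last step is instead run through the inversion estimate $\|\omega(t)\|_{L^2}\le\tfrac{|k|}{2\pi}\int_{\mathbb{R}}\|w(\lambda)\|_{L^2}\,d\lambda$ with the same resonant/non-resonant splitting. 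The step I expect to be hardest is the critical layer at $\lambda\approx1$, the center of the channel where $1-y^2$ attains its maximum: the weights $|\lambda-1|^{1/2}+|\nu/k|^{1/4}$ in \eqref{resol-slip-lambda} encode the inviscid-damping gain, and making the resonant $\lambda$-integrals converge with exactly the right powers of $\nu$ and $|k|$, while checking that these weighted resolvent estimates survive the $O(\nu^{1/2}/|k|)$ contour shift needed for the weight $e^{c\nu^{1/2}t}$, is the delicate point; the remainder is bookkeeping of powers through Plancherel.
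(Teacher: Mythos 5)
Your overall strategy coincides with the paper's: split off the initial data, convert the forced problem into the resolvent problem \eqref{os-1} by Fourier transform in time and Plancherel, feed each forcing into the block of Proposition \ref{resolvent-slip} suited to it (the $H^{-1}_k$ block for $ikf_1+\partial_yf_2$, the $H^1$-type block for $f_3$, the $L^2$ blocks of \eqref{resol-slip} and \eqref{resol-slip-lambda} with the weight floor $|\nu/k|^{1/12}$ for $f_4$), handle the data by Gearhart--Pr\"uss plus the basic energy identity, and obtain the weight $e^{c\nu^{1/2}t}$ by perturbing $\mathcal{L}_k$ by $c\nu^{1/2}$. Up to bookkeeping of which block produces which term, these parts are sound and match the paper.

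You genuinely diverge from the paper on the inviscid-damping term $|k|\|u_H\|_{L^2L^2}^2$ for the free evolution \eqref{toy-sp-slip-H}. The paper follows \cite{CDLZ}: it builds an approximate solution $\omega_H^{(1)}=g(t,y)\omega_H^{(0)}$ out of the inviscid flow with a Gaussian-type weight $g$, estimates the error $\omega_H^{(2)}$ through Lemma \ref{sp-I}, and treats $\nu|k|^2\geq 2$ separately; this construction is where $\|\Delta_k\omega_0\|_{L^2}$ enters. You instead read the data as a delta-in-time forcing, so that $\widehat{\omega_H}(\lambda)$ solves \eqref{os-1} with $F=\omega_0$, and split the $\lambda$-line into a bounded resonant band, where the $\lambda$-uniform third block of \eqref{resol-slip} applies to $F=\omega_0\in H^1_0$, and a nonresonant region with $|\lambda|^{-1}$ decay. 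This appears to work: the nonresonant bound follows from the imaginary part of the pairing with $\bar w$ together with $\|u\|_{L^2}\le|k|^{-1}\|w\|_{L^2}$, although the band must be $|\lambda|\le C_0$ for a larger absolute constant $C_0$ (for $|k|=1$ and $\lambda$ near $-2$ the term $2k\|u\|_{L^2}^2$ is not absorbed). Your route is simpler than the paper's, it even gives the bound with $\|(\partial_y,|k|)\omega_0\|_{L^2}^2$ in place of $\|\Delta_k\omega_0\|_{L^2}^2$, and it transfers to the weighted case since the shift $c\nu^{1/2}$ is absorbed by every coercive term in \eqref{resol-slip}.

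The genuine gap is your last step, the $L^\infty_tL^2$ bound for the forced part. For the $f_3,f_4$ contributions you abandon the energy identity in favor of the inversion bound $\|\omega(t)\|_{L^2}\le\frac{|k|}{2\pi}\int_{\mathbb{R}}\|w(\lambda)\|_{L^2}\,\mathrm{d}\lambda$. This cannot produce the stated constants: passing from $L^2_\lambda$ to $L^1_\lambda$ on the resonant band by Cauchy--Schwarz, the best available output is $\|\omega\|_{L^\infty L^2}\lesssim\nu^{-1/2}\|f_4\|_{L^2L^2}$ and $\|\omega\|_{L^\infty L^2}\lesssim\nu^{-1/4}|k|^{-1/4}\|(\partial_y,|k|)f_3\|_{L^2L^2}$, which exceed the required $\min\{(\nu|k|)^{-1/4},(\nu|k|^2)^{-1/2}\}\|f_4\|_{L^2L^2}$ and $|k|^{-1/2}\|(\partial_y,|k|)f_3\|_{L^2L^2}$ by a factor of order $(|k|/\nu)^{1/4}$, so \eqref{sp-slip} does not follow along this route. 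The repair is exactly what the paper does: keep the energy identity for all four forcings, pairing $f_3$ onto the stream function via $f_3|_{y=\pm1}=0$, so that $|\langle f_3,\omega\rangle|\le\|(\partial_y,|k|)f_3\|_{L^2}\|u\|_{L^2}\le\tfrac12|k|^{-1}\|(\partial_y,|k|)f_3\|_{L^2}^2+\tfrac12|k|\|u\|_{L^2}^2$, and pairing $f_4$ onto $\omega$ with Young's inequality against either $(\nu|k|)^{1/2}\|\omega\|_{L^2}^2$ or $\nu|k|^2\|\omega\|_{L^2}^2$; after time integration every absorbed term is already controlled by the $L^2_tL^2$ estimates from the first part, which closes \eqref{sp-slip}, and the same argument for the conjugated operator closes \eqref{sp-slip-weight}.
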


To establish Proposition \ref{sp-slip-0}, let us introduce 
$$\omega=\omega_I+\omega_H$$
with
\begin{equation}\label{toy-sp-slip-I}
\left \{
\begin{array}{lll}
\partial_t \omega_I-\nu(\partial_y^2-k^2)\omega_I+ik(1-y^2)\omega_I+2ik\varphi_I=-ikf_1-\partial_y f_2-f_3-f_4,\\
(\partial_y^2-k^2)\varphi_I=\omega_I, \ \ u_I=(\partial_y,-ik)\varphi_I,\\
\varphi_I(t,k,\pm 1)=\omega_I (t,k,\pm 1)=0,\\
\omega_I|_{t=0}=0,
\end{array}
\right.
\end{equation}
and
\begin{equation}\label{toy-sp-slip-H}
\left \{
\begin{array}{lll}
\partial_t \omega_H-\nu(\partial_y^2-k^2)\omega_H+ik(1-y^2)\omega_H+2ik\varphi_H=0,\\
(\partial_y^2-k^2)\varphi_H=\omega_H, \ \ u_H=(\partial_y,-ik)\varphi_H,\\
\varphi_H (t,k,\pm 1)=\omega_H (t,k,\pm 1)=0,\\
\omega_H|_{t=0}=\omega_0.
\end{array}
\right.
\end{equation}

The space-time estimates for $\omega_I$ are stated as follows.
\begin{lemma}\label{sp-I}
Let $\omega_I$ be the solution to \eqref{toy-sp-slip-I}, then it holds that 
\begin{equation}\nonumber
\begin{aligned}
&\|\omega_I\|_{L^\infty L^2}^2+(\nu |k|)^{\frac{1}{2}}\|\omega_I\|_{L^2 L^2}^2+\nu \|(\partial_y,|k|)\omega_I\|_{L^2 L^2}^2+|k|\|u_I\|_{L^2 L^2}^2\\
\lesssim& \nu^{-1}\| (f_1,f_2)\|_{L^2 L^2}^2+|k|^{-1} \|(\partial_y,|k|)f_3\|_{L^2}^2+\min\{(\nu |k|)^{-\frac{1}{2}},(\nu |k|^2)^{-1}\}\|f_4\|_{L^2 L^2}^2.
\end{aligned}
\end{equation}

In addition, there exists $0<c\ll 1$ such that 
\begin{equation}\label{sp-slip-1-weight}
\begin{aligned}
&\|e^{c\nu^{\frac{1}{2}}t} \omega_I\|_{L^\infty L^2}^2+(\nu |k|)^{\frac{1}{2}}\|e^{c\nu^{\frac{1}{2}}t}  \omega_I\|_{L^2 L^2}^2\\
&\qquad +\nu \|e^{c\nu^{\frac{1}{2}}t}  (\partial_y,|k|)\omega_I\|_{L^2 L^2}^2+|k|\|e^{c\nu^{\frac{1}{2}}t} u_I\|_{L^2 L^2}^2\\
\lesssim& \nu^{-1}\| e^{c\nu^{\frac{1}{2}}t}  (f_1,f_2)\|_{L^2 L^2}^2\\
&+|k|^{-1} \|e^{c\nu^{\frac{1}{2}}t}  (\partial_y,|k|)f_3\|_{L^2}^2+\min\{(\nu |k|)^{-\frac{1}{2}},(\nu |k|^2)^{-1}\}\|e^{c\nu^{\frac{1}{2}}t}  f_4\|_{L^2 L^2}^2.
\end{aligned}
\end{equation}
\end{lemma}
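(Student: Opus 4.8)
The plan is to derive the space-time estimates for $\omega_I$ from the resolvent bounds of Proposition~\ref{resolvent-slip} by a Fourier transform in time, and then to capture the $L^\infty L^2$ endpoint separately through a vorticity energy identity. Since $\omega_I|_{t=0}=0$, I would first extend $\omega_I$ and the forcing by zero to $t<0$; the extension solves \eqref{toy-sp-slip-I} on all of $\R$ with no contribution at $t=0$. Writing $\tau$ for the time-frequency and setting $\lambda=-\tau/k$, the profile $\hat\omega_I(\tau,\cdot)$ then satisfies exactly the Orr--Sommerfeld system \eqref{os-1} with spectral parameter $\lambda$ and data $\hat F(\tau)=-ik\hat f_1-\partial_y\hat f_2-\hat f_3-\hat f_4$. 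By Plancherel in $t$, every $L^2L^2$ norm on the left equals (up to $2\pi$) the $L^2_\tau$-integral of the corresponding spatial norm of $\hat\omega_I(\tau)$, so it suffices to apply the resolvent bounds frequency-by-frequency, square, and integrate in $\tau$.

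The technical heart is to route each forcing component to the block of \eqref{resol-slip}--\eqref{resol-slip-lambda} matched to its structure, after splitting $\omega_I$ linearly into the four contributions. For $f_1,f_2$ I would use the second line of \eqref{resol-slip}, using $\|-ik\hat f_1\|_{H^{-1}_k}\lesssim\|\hat f_1\|_{L^2}$ and $\|-\partial_y\hat f_2\|_{H^{-1}_k}\lesssim\|\hat f_2\|_{L^2}$; squaring and integrating reproduces $\nu^{-1}\|(f_1,f_2)\|^2$ with the weights $(\nu|k|)^{1/2}$, $\nu$, $|k|$ on the three left-hand norms. For $f_3$ I would use the third line of \eqref{resol-slip} with $|k|^{-1}\|(\partial_y,|k|)\hat f_3\|_{L^2}$ on the right, giving $|k|^{-1}\|(\partial_y,|k|)f_3\|^2$. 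The term $f_4$ is the most delicate: the bound $(\nu|k|^2)^{-1}\|f_4\|^2$ comes from the second line via $\|\hat f_4\|_{H^{-1}_k}\le|k|^{-1}\|\hat f_4\|_{L^2}$, while $(\nu|k|)^{-1/2}\|f_4\|^2$ comes from the first line of \eqref{resol-slip} for the $\omega_I$-norms. Crucially, the first line is \emph{not} enough for $|k|\|u_I\|^2$ in the regime $\nu|k|^3\le1$; there I would invoke \eqref{resol-slip-lambda}, bounding $(|\lambda-1|^{1/2}+|\nu/k|^{1/4})^{-2/3}\le|\nu/k|^{-1/6}$ uniformly in $\tau$, which upon integration produces precisely $(\nu|k|)^{-1/2}$. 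Since both $(\nu|k|)^{-1/2}$ and $(\nu|k|^2)^{-1}$ bounds then hold, the minimum follows.

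For the $L^\infty L^2$ norm, which Plancherel does not see, I would run the basic vorticity energy identity: testing \eqref{toy-sp-slip-I} against $\bar\omega_I$ and integrating in $y$, the transport term $ik(1-y^2)|\omega_I|^2$ and the coupling $2ik(\varphi_I,\omega_I)=-2ik(\|\partial_y\varphi_I\|^2+k^2\|\varphi_I\|^2)$ are purely imaginary, leaving $\tfrac12\|\omega_I(t)\|^2+\nu\int_0^t\|(\partial_y,|k|)\omega_I\|^2=\mathrm{Re}\int_0^t(F,\omega_I)$. Each forcing term is then closed by Young's inequality with a weight tuned so the $\omega_I$-factor is absorbed either into the heat dissipation $\nu\|(\partial_y,|k|)\omega_I\|^2$ or into the $L^2L^2$ norms already obtained above: $f_1$ via the split $\tfrac12(\nu k^2)^{-1}k^2\|f_1\|^2+\tfrac12\nu k^2\|\omega_I\|^2$, whose second piece is absorbed by $\nu\|(\partial_y,|k|)\omega_I\|^2\ge\nu k^2\|\omega_I\|^2$ (this is the point where a naive pairing with $\|\omega_I\|_{L^2L^2}$ fails for large $k$); $f_2$ after integrating by parts to $(f_2,\partial_y\omega_I)$ with weight $\nu$; $f_3$ after integrating by parts twice onto $\varphi_I$ to expose $\|(\partial_y,|k|)f_3\|\,\|u_I\|$, then using the previously established $|k|\|u_I\|^2_{L^2L^2}$; and $f_4$ by choosing the Young weight equal to either $\nu k^2$ or $(\nu|k|)^{1/2}$, which yields the two sides of the minimum.

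Finally, \eqref{sp-slip-1-weight} follows by setting $\Omega=e^{c\nu^{1/2}t}\omega_I$, which solves \eqref{toy-sp-slip-I} with forcing $e^{c\nu^{1/2}t}F+c\nu^{1/2}\Omega$; the extra term $c\nu^{1/2}\Omega$ is of $f_4$-type and contributes at most $c^2\nu(\nu|k|)^{-1/2}\|\Omega\|^2_{L^2L^2}=c^2|k|^{-1}(\nu|k|)^{1/2}\|\Omega\|^2_{L^2L^2}$, which for $c$ small, independent of $\nu,k$ (using $|k|\ge1$), is absorbed by the enhanced-dissipation term $(\nu|k|)^{1/2}\|\Omega\|^2_{L^2L^2}$ on the left. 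I expect the main obstacle to be exactly this weight-tracking: the resolvent bounds are strongly anisotropic in $\nu$ and $|k|$, so one must dispatch each forcing to the correct block and, for $f_4$ as well as for the $L^\infty L^2$ pairing of $f_1$, treat the two frequency regimes $\nu|k|^3\le1$ and $\nu|k|^3\ge1$ separately in order to recover the sharp minimum and the factor $\nu^{-1}$.
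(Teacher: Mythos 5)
Your proposal is correct and follows essentially the same route as the paper: the resolvent estimates of Proposition \ref{resolvent-slip} are applied block-by-block to each forcing term after a linear splitting of $\omega_I$ to get the $L^2L^2$ bounds, the vorticity energy identity supplies the $L^\infty L^2$ endpoint with the same Young-weight choices, and the exponential weight is handled by a perturbation/absorption argument. You in fact make explicit two points the paper leaves implicit, namely the Fourier-in-time/Plancherel reduction to the Orr--Sommerfeld problem \eqref{os-1} with $\lambda=-\tau/k$, and the use of \eqref{resol-slip-lambda} (the first line of \eqref{resol-slip} alone being insufficient when $\nu|k|^3\le 1$) to obtain $|k|\|u_I\|_{L^2L^2}^2\lesssim(\nu|k|)^{-\frac{1}{2}}\|f_4\|_{L^2L^2}^2$.
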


\begin{proof}
Note that the proof is based on the resolvent estimates for $\mathcal{L}_k-\epsilon (\nu |k|)^{\frac{1}{2}}$, we only give the proof for the estimates without $e^{c\nu^{\frac{1}{2}}t}$, and the case with the weight can be obtained via perturbation argument.

By Proposition \ref{resolvent-slip}, we have 
\begin{equation}\nonumber
\begin{aligned}
&(\nu |k|)^{\frac{1}{2}}\|\omega_I\|_{L^2 L^2}^2+\nu \|(\partial_y,|k|)\omega_I\|_{L^2 L^2}^2\\
\lesssim& \nu^{-1}\| (f_1,f_2)\|_{L^2 L^2}^2+|k|^{-1} \|(\partial_y,|k|)f_3\|_{L^2}^2\\
&+\min\{(\nu |k|)^{-\frac{1}{2}},(\nu |k|^2)^{-1}\}\|f_4\|_{L^2 L^2}^2,
\end{aligned}
\end{equation}
where the following energy estimates have been used
\begin{equation}\nonumber
\begin{aligned}
&\nu \|(\partial_y,|k|)\omega_I\|_{L^2 L^2}^2\\
\lesssim& \nu^{-1}\| (f_1,f_2)\|_{L^2 L^2}^2+|k|^{-1} \|(\partial_y,|k|)f_3\|_{L^2}^2+(\nu |k|^2)^{-1}\|f_4\|_{L^2 L^2}^2
\end{aligned}
\end{equation}
and 
\begin{equation}\nonumber
\begin{aligned}
&\nu |k|^2 \| \omega_I\|_{L^2 L^2}^2\\
\lesssim& \nu^{-1}\| (f_1,f_2)\|_{L^2 L^2}^2+|k|^{-1} \|(\partial_y,|k|)f_3\|_{L^2}^2+\|f_4\|_{L^2 L^2}\|\omega\|_{L^2 L^2}\\
\lesssim& \nu^{-1}\| (f_1,f_2)\|_{L^2 L^2}^2+|k|^{-1} \|(\partial_y,|k|)f_3\|_{L^2}^2+(\nu |k|)^{-\frac{1}{2}} \|f_4\|_{L^2 L^2}^2.
\end{aligned}
\end{equation}

It remains to bound $\|u_I\|_{L^2 L^2}$ and $\|\omega_I\|_{L^\infty L^2}$. Let us introduce 
$$\omega_I=\omega_I^{(1)}+\omega_I^{(2)}$$
with
\begin{equation}\label{toy-sp-slip-I-1}
\left \{
\begin{array}{lll}
\partial_t \omega_I^{(1)}-\nu(\partial_y^2-k^2)\omega_I^{(1)}+ik(1-y^2)\omega_I^{(1)}+2ik\varphi_I^{(1)}=-ikf_1-\partial_y f_2-f_3,\\
\partial_t \omega_I^{(2)}-\nu(\partial_y^2-k^2)\omega_I^{(2)}+ik(1-y^2)\omega_I^{(2)}+2ik\varphi_I^{(2)}=f_4,\\
(\partial_y^2-k^2)\varphi_I^{(i)}=\omega_I^{(i)}, \ \ u_I^{(i)}=(\partial_y,-ik)\varphi_I^{(i)},\\
\varphi_I^{(i)}(t,k,\pm 1)=\omega_I^{(i)}(t,k,\pm 1)=0,\ \omega_I^{(i)}|_{t=0}=0,
\end{array}
\right.
\end{equation}
 By Proposition \ref{resolvent-slip}, we have 
\begin{equation}\nonumber
\begin{aligned}
|k| \| u_I^{(1)}\|_{L^2 L^2}^2
\lesssim& \nu^{-1}\| (f_1,f_2)\|_{L^2 L^2}^2+|k|^{-1} \|(\partial_y,|k|)f_3\|_{L^2}^2
\end{aligned}
\end{equation}
and 
\begin{equation}\nonumber
\begin{aligned}
|k| \| u_I^{(2)} \|_{L^2 L^2}^2+(\nu |k|)^{\frac{1}{2}}\|\omega_I^{(2)}\|_{L^2 L^2}^2
\lesssim (\nu |k|)^{-\frac{1}{2}} \|f_4\|_{L^2 L^2}^2.
\end{aligned}
\end{equation}
Meanwhile, one has 
\begin{equation}\nonumber
\begin{aligned}
|k| \| u_I^{(2)} \|_{L^2 L^2}^2\leq |k|^{-1} \|\omega_I^{(2)}\|_{L^2 L^2}^2
\lesssim (\nu |k|^2)^{-1} \|f_4\|_{L^2 L^2}^2.
\end{aligned}
\end{equation}
Thus, it holds that 
\begin{equation}\nonumber
\begin{aligned}
|k| \| u_I^{(2)} \|_{L^2 L^2}^2
\lesssim \min\{(\nu |k|)^{-\frac{1}{2}}, (\nu |k|^2)^{-1}\} \|f_4\|_{L^2 L^2}^2.
\end{aligned}
\end{equation}

Putting the above estimates together, the estimate for $\|u_I\|_{L^2 L^2}$ follows.

Finally, we give the estimate for $\|\omega_I\|_{L^\infty L^2}$. Indeed, we have 
\begin{equation}\nonumber
\begin{aligned}
&\frac{1}{2}\frac{\mathrm{d}}{\mathrm{d}t}\|\omega_I\|_{L^2}^2+\nu \|(\partial_y,|k|)\omega_I\|_{L^2}^2=\mathrm{Re}\langle -ik f_1-\partial_y f_2-f_3-f_4,\omega_I\rangle\\
\leq& \|(f_1,f_2)\|_{L^2}\|(\partial_y,|k|)\omega_I\|_{L^2}+\|(\partial_y,|k|)f_3\|_{L^2}\|(\partial_y,|k|)\varphi_I\|_{L^2}+\|f_4\|_{L^2}\|\omega_I\|_{L^2}\\
\leq& \frac{1}{2} \|(\partial_y,|k|)\omega_I\|_{L^2}^2+\nu^{-1} \|(f_1,f_2)\|_{L^2}^2+|k|^{-1} \|(\partial_y,|k|)f_3\|_{L^2}^2+|k|\|(\partial_y,|k|)\varphi_I\|_{L^2}^2\\
&+(\nu |k|^2)^{-1}\|f_4\|_{L^2}^2
\end{aligned}
\end{equation} 
or 
\begin{equation}\nonumber
\begin{aligned}
&\frac{1}{2}\frac{\mathrm{d}}{\mathrm{d}t}\|\omega_I\|_{L^2}^2+\nu \|(\partial_y,|k|)\omega_I\|_{L^2}^2=\mathrm{Re}\langle -ik f_1-\partial_y f_2-f_3-f_4,\omega_I\rangle\\
\leq& \|(f_1,f_2)\|_{L^2}\|(\partial_y,|k|)\omega_I\|_{L^2}+\|(\partial_y,|k|)f_3\|_{L^2}\|(\partial_y,|k|)\varphi_I\|_{L^2}+\|f_4\|_{L^2}\|\omega_I\|_{L^2}\\
\leq& \frac{1}{2} \|(\partial_y,|k|)\omega_I\|_{L^2}^2+\nu^{-1} \|(f_1,f_2)\|_{L^2}^2+|k|^{-1} \|(\partial_y,|k|)f_3\|_{L^2}^2+|k|\|(\partial_y,|k|)\varphi_I\|_{L^2}^2\\
&+\frac{1}{2}\left((\nu |k|)^{-1/2}\|f_4\|_{L^2}^2+(\nu |k|)^{\frac{1}{2}}\|\omega_I\|_{L^2}^2\right),
\end{aligned}
\end{equation}
which gives that 
\begin{equation}\nonumber
\begin{aligned}
&\frac{1}{2}\frac{\mathrm{d}}{\mathrm{d}t}\|\omega_I\|_{L^2}^2+\nu \|(\partial_y,|k|)\omega_I\|_{L^2}^2\\
\lesssim& \nu^{-1} \|(f_1,f_2)\|_{L^2}^2+|k|^{-1} \|(\partial_y,|k|)f_3\|_{L^2}^2\\
&+\min\{(\nu |k|)^{-1/2},(\nu |k|^2)^{-1}\}\|f_4\|_{L^2}^2+|k|\|u_I\|_{L^2}^2+(\nu |k|)^{\frac{1}{2}}\|\omega_I\|_{L^2}^2.
\end{aligned}
\end{equation}
Therefore, we have 
\begin{equation}\nonumber
\begin{aligned}
&\|\omega_I\|_{L^\infty L^2}^2+\nu \|(\partial_y,|k|)\omega_I\|_{L^2 L^2}^2\\
\lesssim& \nu^{-1} \|(f_1,f_2)\|_{L^2 L^2}^2+|k|^{-1} \|(\partial_y,|k|)f_3\|_{L^2 L^2}^2\\
&+\min\{(\nu |k|)^{-1/2},(\nu |k|^2)^{-1}\}\|f_4\|_{L^2 L^2}^2+|k|\|u_I\|_{L^2 L^2}^2+(\nu |k|)^{\frac{1}{2}}\|\omega_I\|_{L^2 L^2}^2\\
\lesssim&\nu^{-1} \|(f_1,f_2)\|_{L^2 L^2}^2+|k|^{-1} \|(\partial_y,|k|)f_3\|_{L^2 L^2}^2\\
&+\min\{(\nu |k|)^{-1/2},(\nu |k|^2)^{-1}\}\|f_4\|_{L^2 L^2}^2.
\end{aligned}
\end{equation}
The proof is completed.
\end{proof}

Next we give the estimates for $\omega_H$.
\begin{lemma}
It holds that 
\begin{equation}\nonumber
\begin{aligned}
\|&\omega_H\|_{L^\infty L^2}^2+\nu \|(\partial_y,|k|)\omega_H\|_{L^2 L^2}^2\\
&+(\nu |k|)^{\frac{1}{2}}\|\omega_H\|_{L^2 L^2}^2+|k| \| u_H \|_{L^2 L^2}^2
\lesssim \|\Delta_k \omega_0\|_{L^2}^2.
\end{aligned}
\end{equation} 
In addition, there exists $0<c\ll 1$ such that 
\begin{equation}\nonumber
\begin{aligned}
\|e^{c\nu^{\frac{1}{2}}t} & \omega_H\|_{L^\infty L^2}^2+(\nu |k|)^{\frac{1}{2}}\|e^{c\nu^{\frac{1}{2}}t} \omega_H\|_{L^2 L^2}^2\\
&+\nu \|e^{c\nu^{\frac{1}{2}}t} (\partial_y,|k|)\omega_H\|_{L^2 L^2}^2+|k| \| e^{c\nu^{\frac{1}{2}}t} u_H \|_{L^2 L^2}^2
\lesssim \|\Delta_k \omega_0\|_{L^2}^2.
\end{aligned}
\end{equation}
\end{lemma}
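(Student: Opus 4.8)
The plan is to split the four quantities on the left-hand side according to the mechanism controlling each one, and in every case to reduce the homogeneous evolution \eqref{toy-sp-slip-H} back to the stationary resolvent bounds of Proposition~\ref{resolvent-slip} (with $F=\omega_0$), exactly as the inhomogeneous Lemma~\ref{sp-I} was derived from the same proposition. I would handle the unweighted estimate first and then recover the weighted one by a shift.

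First I would record that $\mathcal{L}_k$ is accretive. Pairing \eqref{toy-sp-slip-H} with $\omega_H$ in $L^2_y$ and taking real parts, the transport term $ik(1-y^2)\omega_H$ and the nonlocal term $2ik\varphi_H=2ik\Delta_k^{-1}\omega_H$ are purely imaginary (for the latter, $\langle \varphi_H,\Delta_k\varphi_H\rangle=-\|(\partial_y,|k|)\varphi_H\|_{L^2}^2$ is real), so $\tfrac12\tfrac{d}{dt}\|\omega_H\|_{L^2}^2+\nu\|(\partial_y,|k|)\omega_H\|_{L^2}^2=0$. Integrating in time gives $\|\omega_H\|_{L^\infty L^2}^2+2\nu\|(\partial_y,|k|)\omega_H\|_{L^2L^2}^2\le\|\omega_0\|_{L^2}^2$, and since the Dirichlet operator $\Delta_k$ has spectrum $\le -k^2\le-1$ one has $\|\omega_0\|_{L^2}\le\|\Delta_k\omega_0\|_{L^2}$, so the first and (diffusion) second terms are already controlled by $\|\Delta_k\omega_0\|_{L^2}^2$.

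For the enhanced dissipation term $(\nu|k|)^{1/2}\|\omega_H\|_{L^2L^2}^2$, the first line of \eqref{resol-slip} is precisely the uniform resolvent bound $\|(\mathcal{L}_k-ik\lambda)^{-1}\|_{L^2\to L^2}\lesssim(\nu|k|)^{-1/2}$ for all $\lambda\in\R$. Together with accretivity, the quantitative Gearhart--Pr\"uss theorem yields the decay $\|\omega_H(t)\|_{L^2}\lesssim e^{-c_0(\nu|k|)^{1/2}t}\|\omega_0\|_{L^2}$ for some $c_0>0$; integrating its square in time gives $(\nu|k|)^{1/2}\|\omega_H\|_{L^2L^2}^2\lesssim\|\omega_0\|_{L^2}^2\le\|\Delta_k\omega_0\|_{L^2}^2$. (Equivalently one can run a Plancherel argument using the sharper $\lambda$-dependent bounds \eqref{resol-slip-lambda}, but Gearhart--Pr\"uss is cleaner here.) The genuinely delicate term, and the one responsible for the improvement to $2/3$, is the inviscid damping term $|k|\|u_H\|_{L^2L^2}^2$, which cannot come from the decay of $\omega_H$ alone (that only reproduces the weaker $3/4$ bound). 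Here I would use the Laplace transform on the imaginary axis: with $\tau=-k\lambda$ the transform $\hat\omega_H(\tau)$ solves \eqref{os-1} with $F=\omega_0$, so $\hat\omega_H(\tau)=w_\lambda$ and $u[\hat\omega_H(\tau)]=u_\lambda$, and Plancherel (legitimate since $\omega_H\in L^2_tL^2_y$ by the previous step) gives $|k|\|u_H\|_{L^2L^2}^2=\tfrac{|k|^2}{2\pi}\int_{\R}\|u_\lambda\|_{L^2}^2\,d\lambda$. I would split this integral at $|\lambda|=\Lambda_0$ for an absolute constant $\Lambda_0$ (say $\Lambda_0=4$) chosen so the profile range $[0,1]$ sits well inside $[-\Lambda_0,\Lambda_0]$. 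On $|\lambda|\le\Lambda_0$ I use the third line of \eqref{resol-slip}, $\|u_\lambda\|_{L^2}\lesssim|k|^{-1}\|(\partial_y,|k|)\omega_0\|_{L^2}$, which is uniform in $\lambda$ and contributes $\lesssim\|(\partial_y,|k|)\omega_0\|_{L^2}^2\le\|\Delta_k\omega_0\|_{L^2}^2$. On $|\lambda|>\Lambda_0$ the profile $1-y^2-\lambda$ has a fixed sign with $|1-y^2-\lambda|\ge|\lambda|/2$, so the imaginary part of the resolvent identity $\langle(\mathcal{L}_k-ik\lambda)w_\lambda,w_\lambda\rangle=\langle\omega_0,w_\lambda\rangle$ (the nonlocal term being absorbed by $\|(\partial_y,|k|)\varphi_\lambda\|_{L^2}\le|k|^{-1}\|w_\lambda\|_{L^2}$) gives the elementary bound $\|w_\lambda\|_{L^2}\lesssim\|\omega_0\|_{L^2}/(|k||\lambda|)$, whence $\|u_\lambda\|_{L^2}\le|k|^{-1}\|w_\lambda\|_{L^2}\lesssim\|\omega_0\|_{L^2}/(|k|^2|\lambda|)$; this is square-integrable in $\lambda$ and contributes $\lesssim|k|^{-2}\|\omega_0\|_{L^2}^2$. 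Adding the two pieces bounds the inviscid damping term by $\|\Delta_k\omega_0\|_{L^2}^2$.

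Finally, for the weighted estimate I would set $\omega_H^w=e^{c\nu^{1/2}t}\omega_H$, which solves \eqref{toy-sp-slip-H} with $\mathcal{L}_k$ replaced by the real shift $\mathcal{L}_k-c\nu^{1/2}$. Since $c\nu^{1/2}\le c(\nu|k|)^{1/2}$ for $|k|\ge1$, taking $c\le c_0/2$ preserves the exponential decay, $\|\omega_H^w(t)\|_{L^2}\lesssim e^{-\frac{c_0}{2}(\nu|k|)^{1/2}t}\|\omega_0\|_{L^2}$; moreover, being real, the shift leaves the imaginary-part argument of the inviscid damping step untouched and the resolvent bounds of \eqref{resol-slip} persist for $\mathcal{L}_k-c\nu^{1/2}$ by a Neumann-series perturbation (as already used in Lemma~\ref{sp-I}). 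Hence all three steps apply verbatim to $\omega_H^w$ and give the weighted inequality. The main obstacle is the inviscid damping term: producing the correct powers of $\nu$ and $|k|$ there is what forces the spectral split together with the $H^1$-in-$F$ resolvent bound near the critical range, rather than the cheap route through the decay of $\omega_H$.
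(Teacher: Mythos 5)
Your proof is correct, and for the decisive term $|k|\|u_H\|_{L^2L^2}^2$ it follows a genuinely different route from the paper's. The paper never Laplace-transforms the homogeneous problem: for $\nu|k|^2\le 2$ it decomposes $\omega_H=\omega_H^{(1)}+\omega_H^{(2)}$, where $\omega_H^{(1)}=g(t,y)\,\omega_H^{(0)}$ is a viscous-weighted solution of the linearized \emph{Euler} equation whose damping estimate $|k|\|(\partial_y,|k|)\varphi_H^{(1)}\|_{L^2L^2}^2\lesssim\|(\partial_y,|k|)\omega_0\|_{L^2}^2$ is imported from Lemma 4.2 of \cite{CDLZ}, while the correction $\omega_H^{(2)}$ solves an inhomogeneous problem (forced by commutator terms) estimated through Lemma \ref{sp-I}; the regime $\nu|k|^2\ge 2$ is handled by a separate elementary bound. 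You instead reduce the homogeneous evolution directly to the stationary estimates of Proposition \ref{resolvent-slip} via Laplace--Plancherel, and your key observation---needed because the datum $\omega_0$ has no decay in $\lambda$, so the uniform resolvent bound alone would give a divergent $\lambda$-integral---is the spectral split: on the critical range the third line of \eqref{resol-slip} gives $\|u_\lambda\|_{L^2}\lesssim|k|^{-1}\|(\partial_y,|k|)\omega_0\|_{L^2}$ over a set of finite measure, and on the far range the imaginary part of the resolvent identity gives $\|w_\lambda\|_{L^2}\lesssim\|\omega_0\|_{L^2}/(|k||\lambda|)$, which is square-integrable at infinity; your bookkeeping $\frac{|k|^2}{2\pi}\int\|u_\lambda\|_{L^2}^2\,d\lambda\lesssim\|(\partial_y,|k|)\omega_0\|_{L^2}^2+|k|^{-2}\|\omega_0\|_{L^2}^2$ is right, and the remaining ingredients (accretivity energy identity, Gearhart--Pr\"uss for enhanced dissipation, real-shift perturbation for the weight) coincide with the paper's. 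What each approach buys: yours is shorter and self-contained given the resolvent estimates, avoiding the linearized-Euler damping machinery and the case split; the paper's, taken from \cite{CDLZ}, yields pointwise-in-time structural information (closeness of $\omega_H$ to a damped Euler evolution) and is the form of argument that survives in settings (3-D, non-slip boundary conditions) where an $H^1$-forcing resolvent bound is not directly available. Two points you gloss over, both easily repaired: the shifted near-range bound is not a pure Neumann-series perturbation, since the third line of \eqref{resol-slip} carries different norms on its two sides---instead write $(\mathcal{L}_k-ik\lambda)w=\omega_0+c\nu^{1/2}w$ and absorb $c\nu^{1/2}|k|^{-1}\|(\partial_y,|k|)w\|_{L^2}\le c\,|\nu/k|^{1/2}\|(\partial_y,|k|)w\|_{L^2}$ into the left-hand side for $c$ small; and the weighted energy identity produces the extra term $c\nu^{1/2}\|e^{c\nu^{1/2}t}\omega_H\|_{L^2L^2}^2$, which must be (and is) controlled by the weighted enhanced-dissipation bound using $\nu^{1/2}\le(\nu|k|)^{1/2}$.
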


\begin{proof}
The proof is based on the resolvent estimates for $\mathcal{L}_k-\epsilon (\nu |k|)^{\frac{1}{2}}$, we only give the proof for the estimates without $e^{c\nu^{\frac{1}{2}}t}$, and the case with the weight can be obtained via perturbation argument with standard energy argument.

By the Gearhart-Pr\" uss type lemma in \cite{Wei} and the resolvent estimates in Proposition \ref{resolvent-slip}, we have 
$$\|\omega_H(t)\|_{L^2}\leq Ce^{-c_1(\nu |k|)^{\frac{1}{2}}t}\|\omega_0\|_{L^2},$$
which gives that 
$$\|\omega_H\|_{L^\infty L^2}^2+(\nu |k|)^{\frac{1}{2}}\|\omega_H\|_{L^2 L^2}^2\lesssim \|\omega_0\|_{L^2}^2.$$
The energy argument gives that 
\begin{equation}\nonumber
\begin{aligned}
\frac{1}{2}\frac{\mathrm{d}}{\mathrm{d}t}\|\omega_H\|_{L^2}^2+\nu \|(\partial_y,|k|)\omega_H\|_{L^2}^2 =0,
\end{aligned}
\end{equation}
which yields that 
\begin{equation}\nonumber
\begin{aligned}
\nu \|(\partial_y,|k|)\omega_H\|_{L^2 L^2}^2 \leq \|\omega_0\|_{L^2}^2.
\end{aligned}
\end{equation}

Next we give the estimate for $u_H$. 

If $\nu |k|^2\leq 2$, let $\omega_H^{(0)}$ be the solution to
\begin{equation}\nonumber
\begin{aligned}
&\partial_t \omega_H^{(0)}+ik(1-y^2)\omega_H^{(0)}+2ik\varphi_H^{(0)}=0,\\
&(\partial_y^2-|k|^2)\varphi_H^{(0)}=\omega_H^{(0)}, \varphi_H^{(0)}|_{y=\pm 1}=0, \ \omega_H^{(0)}|_{t=0}=\omega_0,
\end{aligned}
\end{equation}
due to $\omega_0\in H^1_0$, it holds that $\omega_H^{(0)}|_{y=\pm 1}=0$ for any $t\geq 0$.

Following the arguments of \cite{CDLZ}, let us introduce 
\begin{equation}\nonumber
\begin{aligned}
&\omega_H^{(1)}=g(t,y)\omega^{(0)}_H, \ g(t,y)=e^{-\nu |k|^2t-4\nu |k|^2y^2t^3/3-(\nu |k|)^{1/2}t},\\
& (\partial_y^2-|k|^2)\varphi_H^{(i)}=\omega_H^{(i)}, \quad \varphi_H^{(i)}(\pm 1)=0, \quad i=0,1.
\end{aligned}
\end{equation}
It is easy to find that
\begin{equation*}
\begin{aligned}
&\partial_t\omega_{H}^{(1)}+\nu |k|^2\omega_{H}^{(1)}+ (4\nu |k|^2y^2t^2+(\nu |k|)^{1/2})\omega_{H}^{(1)}\\
&\qquad \qquad +ik(1-y^2)\omega_{H}^{(1)}=-2ikg(t,y)\varphi_{H}^{(0)},\\
& \omega_H^{(0)}|_{t=0}=\omega_H^{(1)}|_{t=0}=\omega_0, \ \omega_H^{(1)}|_{y=\pm 1}=0.
\end{aligned}
\end{equation*}
Thus, we have
\begin{align*}
   &\partial_t\omega_{H}^{(1)}-\nu(\partial_y^2-|k|^2)\omega_{H}^{(1)} +ik(1-y^2)\omega_{H}^{(1)}+2ik\varphi_{H}^{(1)}\\
    &=-\nu\partial_y^2\omega_{H}^{(1)}- 4\nu |k|^2y^2t^2\omega_{H}^{(1)} -(\nu| k|)^{1/2}\omega_{H}^{(1)}-2ik(g\varphi_{H}^{(0)}-\varphi_{H}^{(1)}).
 \end{align*}

 Now we decompose $\omega_H$ as follows
\begin{align}\nonumber
   & \omega_H=\omega_H^{(1)}+\omega_H^{(2)},
\end{align}
where $\omega_H^{(2)}$ satisfies
\begin{equation*}
\left \{
\begin{array}{lll}
(\partial_t+\mathcal{L}_k)\omega_H^{(2)}=\nu\partial_y^2\omega_{H}^{(1)}+ 4\nu |k|^2y^2t^2\omega_{H}^{(1)} +|\nu k|^{1/2}\omega_{H}^{(1)}\\
  \qquad  \qquad\qquad\quad +2ik(g(t,y)\varphi_{H}^{(0)}-\varphi_{H}^{(1)}),\\
 (\partial_y^2-|k|^2) \varphi_H^{(2)}=\omega_H^{(2)},\
\omega_H^{(2)}|_{t=0}=0, \   \omega_H^{(2)}|_{y=\pm 1}=0,
\end{array}\right.
\end{equation*}

Following the arguments as in Lemma 4.2 of \cite{CDLZ}, we have 
\begin{align}\label{u-H-1}
|k|\|(\partial_y,|k|)\varphi_H^{(1)}\|_{L^2 L^2}^2\lesssim \|(\partial_y,|k|)\omega_0\|_{L^2}^2.
 \end{align}

Next, we estimate $\|(\partial_y,|k|)\varphi_H^{(2)}\|_{L^2 L^2}$. Let us recall from Lemma 4.2 in \cite{CDLZ} (here we set $k_1=k$) that 
\begin{align*}
\partial_y^2\omega^{(1)}_H 
&=\partial_y\omega_1^{(1)}+2ik t\omega_{H}^{(1)}+2ik ty(\omega_1^{(1)}+2ik ty\omega^{(1)}_{H})
 ,\\ \partial_y^2\omega_{H}^{(1)}&+4|k|^2y^2t^2\omega^{(1)}_{H} =\partial_y\omega_1^{(1)}+2ikt\omega^{(1)}_{H}+2ikty\omega_1^{(1)},\\
 &\omega_1=X{\omega}^{(0)}_H,\ \omega_1^{(1)}=X\omega^{(1)}_{H},\ X=\partial_y-2iky t.\\
 &\phi_1^{(1)}=g(t,y)\varphi_H^{(0)}-\varphi_H^{(1)},
\end{align*}
and we have 
\begin{align*}
   &\partial_t\omega^{(2)}_H-\nu(\partial_y^2-|k|^2)\omega_{H}^{(2)}
   +ik \big( (1-y^2)\omega_H^{(2)}+2\varphi^{(2)}_H\big)\\
   &=\nu(\partial_y\omega_1^{(1)}+2ikt\omega^{(1)}_{H}+2ik ty\omega_1^{(1)})+ |\nu k|^{1/2}\omega_{H}^{(1)}
   +2ik \phi_1^{(1)}.
\end{align*}
Applying Lemma \ref{sp-I} with $f_2=-\nu \omega_{1}^{(1)}$, $f_4=-2\nu ikty\omega_{1}^{(1)}-2\nu ikt\omega_H^{(1)}-(\nu |k|)^{\frac{1}{2}}\omega_H^{(1)}$ and $f_3=-2ik\phi_1^{(1)}$ that
\begin{align*}
   \|(\partial_y,|k|)\varphi_H^{(2)}\|_{L^2 L^2}^2 \lesssim& \nu^{-1} |k|^{-1} \|\nu \omega_{1}^{(1)}\|_{L^2 L^2}^2 +|k|^{-2} \|k (\partial_y,|k|)\phi_1^{(1)}\|_{L^2 L^2}^2\\
   &+\nu^{-\frac{1}{2}} |k|^{-\frac{3}{2}} \bigg(\|\nu kty\omega_{1}^{(1)}\|_{L^2 L^2}^2+\|(|\nu kt|+(\nu |k|)^{\frac{1}{2}})\omega_H^{(1)}\|_{L^2 L^2}^2\bigg)\\
   \lesssim& |k|^{-1} \|\Delta_k \omega_0\|_{L^2}^2.
\end{align*}

Therefore, we conclude that 
$$ |k|\|u_H\|_{L^2 L^2}^2 \lesssim \|\Delta_k \omega_0\|_{L^2}^2,$$
which completes the proof for case of $\nu |k|^2 \leq 2$.

If $\nu |k|^2\geq 2$, then it holds that 
\begin{align*}
|k|^2 \|u_H\|_{L^2 L^2}^2\lesssim \nu |k|^2 \|\omega_H\|_{L^2 L^2}^2 \lesssim \|\omega_0\|_{L^2}^2,
\end{align*}
which completes that proof.
\end{proof}

\section{Nonlinear stability}\label{pf-main}
For the nonlinear problem with Navier-slip boundary condition, let us recall that \begin{equation}\label{nonlinear-eq}
\left \{
\begin{array}{lll}
\partial_t \omega_k +\mathcal{L}_k\omega_k=-ik(f_1)_k-\partial_y (f_2)_k,\\
(\partial_y^2-k^2)\varphi_k=\omega_k, \\
 \varphi_k(\pm 1)=\omega_k(\pm 1)=0,\\
\omega_k|_{t=0} =\omega_{0,k}(k,y),\end{array}
\right.
\end{equation}
where 
$$\mathcal{L}_k:=-\nu (\partial_y^2-k^2)+ik(1-y^2)+2ik(\partial_y^2-|k|^2)^{-1}$$
and 
$$(f_1)_k=-\sum_{l\in\mathbb{Z}} (u_1)_l \omega_{k-l}, \ (f_2)_k=-\sum_{l\in\mathbb{Z}} (u_2)_l \omega_{k-l}.$$

For simplicity, we define $\overline{\omega_0}:=\omega_{0,0}(0,y)$ for the zero mode part of the initial data.

Recall that for $k\not=0,$  
\begin{align*}
\mathcal{E}_k=& (\nu|k|)^{\frac{1}{4}}\|e^{c\nu^{\frac{1}{2}} t} \omega_k\|_{L^2 L^2}+\nu^{\frac{1}{2}}|k|\|e^{c\nu^{\frac{1}{2}} t} \omega_k\|_{L^2 L^2}\\
&+\|e^{c\nu^{\frac{1}{2}} t}\omega_k\|_{L^\infty L^2}+|k|^{\frac{1}{2}} \|e^{c\nu^{\frac{1}{2}} t} u_k\|_{L^2 L^2},
\end{align*}
and $\mathcal{E}_0=\|\overline{\omega}\|_{L^\infty L^2}$ for $k=0$.

Now we are on a position to give the proof of Theorem \ref{nonlinear-slip}.

\begin{proof}[Proof of Theorem \ref{nonlinear-slip}]
The global well-posedness of 2-D Navier-Stokes equation around the Poiseuille flow can be obtained by classical theory for Navier-Stokes equation. So we only focus on the stability estimates in Theorem \ref{nonlinear-slip}.

First let us treat with the zero modes. If $k=0$, by the divergence-free condition, we have $\partial_y (u_2)_0=0$, which implies by $(u_2)_0|_{y=\pm 1}=0$ that $ (u_2)_0=0$. Therefore, the equation for zero modes is reduced to 
$$\partial_t (u_1)_0-\nu \partial_y^2  (u_1)_0=-\partial_y (f_2)_0.$$
By the similar arguments as for 2-D Couette flow \cite{CLWZ}, we have  
\begin{align}\label{0mode}
\mathcal{E}_0^2\lesssim \nu^{-1}\|(f_2)_0\|_{L^2L^2}^2+\|\overline{\omega_0}\|_{L^2}^2.
\end{align}

For $\mathcal{E}_k$, by the space-time estimates in Proposition \ref{sp-slip-0}, one has 
\begin{equation}\nonumber
\begin{aligned}
\mathcal{E}_k 
 \lesssim  \|\Delta_k \omega_{0,k}\|_{L^2}+ \nu^{-\frac{3}{8}} |k|^{\frac{3}{8}}  \|e^{c\nu^{\frac{1}{2}} t} (f_1)_k\|_{L^2 L^2}+\nu^{-\frac{1}{2}}\| e^{c\nu^{\frac{1}{2}} t}(f_2)_k\|_{L^2 L^2}.
\end{aligned}
\end{equation}
where we have used the fact that $|k| \min\{(\nu |k|)^{-\frac{1}{4}}, (\nu |k|^2)^{-\frac{1}{2}}\}\leq  \nu^{-\frac{3}{8}} |k|^{\frac{3}{8}}  $.

For $k\not=0$, it holds that 
\begin{equation}\label{f-2-k}
\begin{aligned}
\|e^{c\nu^{\frac{1}{2}} t} (f_2)_k\|_{L^2 L^2}\leq& \sum_{l\in \mathbb{Z}}\| e^{c\nu^{\frac{1}{2}} t} (u_2)_l \omega_{k-l}\|_{L^2 L^2}\\
\leq& \sum_{l\in \mathbb{Z}}\| e^{c\nu^{\frac{1}{2}} t}(u_2)_l\|_{L^2 L^\infty}\| e^{c\nu^{\frac{1}{2}} t} \omega_{k-l}\|_{L^\infty L^2}\\
\leq &\sum_{l\in \mathbb{Z}}\|e^{c\nu^{\frac{1}{2}} t} (u_2)_l\|_{L^2 L^2}^{\frac{1}{2}}\|e^{c\nu^{\frac{1}{2}} t} \partial_y (u_2)_l\|_{L^2 L^2}^{\frac{1}{2}} \mathcal{E}_{k-l}\\
\leq &\sum_{l\in \mathbb{Z}} (|l|^{-\frac{1}{2}}\mathcal{E}_l)^{\frac{1}{2}}( |l|\|e^{c\nu^{\frac{1}{2}} t} (u_1)_l\|_{L^2 L^2})^{\frac{1}{2}} \mathcal{E}_{k-l}\\
\leq &\sum_{l\in \mathbb{Z}} (|l|^{-\frac{1}{2}}\mathcal{E}_l)^{\frac{1}{2}}(|l|^{-\frac{1}{2}} |l|\mathcal{E}_l)^{\frac{1}{2}} \mathcal{E}_{k-l}\lesssim \sum_{l\in \mathbb{Z}} \mathcal{E}_l \mathcal{E}_{k-l}
\end{aligned}
\end{equation}
and 
\begin{equation}\label{f-1-k-0}
\begin{aligned}
\|e^{c\nu^{\frac{1}{2}} t} (f_1)_k\|_{L^2 L^2}\leq& \|\overline{u}_1\|_{L^\infty L^\infty}\|e^{c\nu^{\frac{1}{2}} t} \omega_k\|_{L^2 L^2}+\|e^{c\nu^{\frac{1}{2}} t} (u_1)_k\|_{L^2 L^\infty}\|\overline{\omega}\|_{L^\infty L^2}\\
&+\sum_{l\in \mathbb{Z}\setminus \{0,k\}} \| e^{c\nu^{\frac{1}{2}} t} (u_1)_l\|_{L^\infty L^\infty}\|e^{c\nu^{\frac{1}{2}} t}\omega_{k-l}\|_{L^2 L^2}.
\end{aligned}
\end{equation}
It remains to estimate the terms on the right-hand side of \eqref{f-1-k-0}.

For $l\not=0,k$, it holds that $|k|\lesssim |l||k-l|$, then 
\begin{equation}\nonumber
\begin{aligned}
\sum_{l\in \mathbb{Z}\setminus \{0,k\}}& \| e^{c\nu^{\frac{1}{2}} t} (u_1)_l\|_{L^\infty L^\infty}\|e^{c\nu^{\frac{1}{2}} t} \omega_{k-l}\|_{L^2 L^2}\\
\lesssim & \sum_{l\in \mathbb{Z}\setminus \{0,k\}}  \|e^{c\nu^{\frac{1}{2}} t}  (u_1)_l\|_{L^\infty L^2}^{\frac{1}{2}} \|e^{c\nu^{\frac{1}{2}} t} \partial_y(u_1)_l\|_{L^\infty L^2}^{\frac{1}{2}}\\
&\times \|e^{c\nu^{\frac{1}{2}} t} \omega_{k-l}\|_{L^2 L^2}^{\frac{5}{6}}\|e^{c\nu^{\frac{1}{2}} t} \omega_{k-l}\|_{L^2 L^2}^{\frac{1}{6}}\\
\lesssim& \sum_{l\in \mathbb{Z}\setminus \{0,k\}} |l|^{-\frac{1}{2}}\mathcal{E}_l (\nu |k-l|)^{-\frac{1}{4}\cdot \frac{5}{6}} (\nu^{\frac{1}{2}} |k-l|)^{-\frac{1}{6}}\mathcal{E}_{k-l}\\ 
\lesssim& \nu^{-\frac{7}{24}} |k|^{-\frac{3}{8}} \sum_{l\in \mathbb{Z}\setminus \{0,k\}} |l|^{-\frac{1}{8}}\mathcal{E}_l \mathcal{E}_{k-l}
\end{aligned}
\end{equation}
and 
\begin{equation}\nonumber
\begin{aligned}
 \|&\overline{u}_1\|_{L^\infty L^\infty}\|e^{c\nu^{\frac{1}{2}} t} \omega_k\|_{L^2 L^2}+\|e^{c\nu^{\frac{1}{2}} t}(u_1)_k\|_{L^2 L^\infty}\|\overline{\omega}\|_{L^\infty L^2}\\
\lesssim&   \|\overline{\omega}\|_{L^\infty L^2 } (\nu |k|)^{-\frac{1}{4}\cdot \frac{5}{6}} (\nu^{\frac{1}{2}} |k|)^{-\frac{1}{6}}\mathcal{E}_{k}\\
&+\|e^{c\nu^{\frac{1}{2}} t}(u_1)_k\|_{L^2 L^2}^{\frac{1}{2}} \|e^{c\nu^{\frac{1}{2}} t}(\omega)_k\|_{L^2 L^2}^{\frac{1}{2}} \|\overline{\omega}\|_{L^\infty L^2}\\
\lesssim&\nu^{-\frac{7}{24}} |k|^{-\frac{3}{8}} \mathcal{E}_0 \mathcal{E}_{k}.
\end{aligned}
\end{equation}
Thus, we have 
\begin{equation}\label{f-1-k}
\begin{aligned}
\|e^{c\nu^{\frac{1}{2}} t} (f_1)_k\|_{L^2 L^2} \lesssim \nu^{-\frac{7}{24}} |k|^{-\frac{3}{8}}  \sum_{l\in \mathbb{Z}}  \mathcal{E}_l  \mathcal{E}_{k-l}
\end{aligned}
\end{equation}

Combining \eqref{0mode}, \eqref{f-2-k} and \eqref{f-1-k}, we have 
\begin{equation}\nonumber
\begin{aligned}
\mathcal{E}_k 
 \lesssim&  \|\Delta_k \omega_{0,k}\|_{L^2}+\nu^{-\frac{3}{8}} |k|^{\frac{3}{8}} \|e^{c\nu^{\frac{1}{2}} t} (f_1)_k\|_{L^2 L^2}+\nu^{-\frac{1}{2}}\|e^{c\nu^{\frac{1}{2}} t} (f_2)_k\|_{L^2 L^2}\\
 \lesssim&  \|\Delta_k \omega_{0,k}\|_{L^2}+\nu^{-\frac{2}{3}}  \sum_{l\in \mathbb{Z}}  \mathcal{E}_l  \mathcal{E}_{k-l},\\
 \mathcal{E}_0\lesssim& \|\overline{\omega}_0\|_{L^2}+\nu^{-\frac{1}{2}}  \sum_{l\in \mathbb{Z}\setminus \{0\}}  \mathcal{E}_l  \mathcal{E}_{-l},
\end{aligned}
\end{equation}
which gives that 
\begin{equation}\nonumber
\begin{aligned}
\sum_{k\in\mathbb{Z}} \mathcal{E}_k 
  \lesssim  \sum_{k\in\mathbb{Z}} \|\Delta_k \omega_{0,k}\|_{L^2}+\nu^{-\frac{2}{3}}  \sum_{k\in\mathbb{Z}} \sum_{l\in \mathbb{Z}}  \mathcal{E}_l  \mathcal{E}_{k-l}.
  \end{aligned}
\end{equation}

Due to $\|u_0\|_{H^{\frac{7}{2}+}} \leq \epsilon_1 \nu^{2/3}$ with 
$$    \sum_{k\in\mathbb{Z}} \|\Delta_k \omega_{0,k}\|_{L^2} \leq \epsilon_1 C\nu^{\frac{2}{3}}.$$
If $\epsilon_1$ is suitably small, one can deduce from continuous argument that 
\begin{equation}\nonumber
\begin{aligned}
\sum_{k\in\mathbb{Z}} \mathcal{E}_k 
\leq  \epsilon_1 C\nu^{\frac{2}{3}}.
  \end{aligned}
\end{equation}

The proof is completed.
\end{proof}

\smallskip
{\bf Acknowledgment.}

The authors thank to Prof. Zhifei Zhang for valuable discussions and suggestions. S. Ding is supported by the Key Project of National Natural Science Foundation of China under Grant 12131010, National Natural Science Foundation of China under Grant 12271032. Z. Lin is partially supported by National Natural Science Foundation of China under Grant 11971009.

\bigskip

\end{document}